\documentclass[10pt,oneside]{amsart}

\usepackage[
paper=a4paper,
text={138mm,208mm},centering
]{geometry}

\usepackage{amsfonts, amstext, amsmath, amsthm, amscd, amssymb}
\usepackage{graphics, pinlabel, color}
\usepackage[all,graph]{xy}
\usepackage[bookmarks]{hyperref}
\usepackage{float, placeins, booktabs, longtable, array}

\renewcommand{\setminus}{{\smallsetminus}}


\newcommand{\ZZ}{{\mathbb{Z}}}

\newcommand{\QQ}{{\mathbb{Q}}}

\DeclareMathOperator{\Id}{Id}
\DeclareMathOperator{\lk}{lk}

\DeclareMathOperator{\sign}{sign}
\DeclareMathOperator{\im}{im}

\def\co{\colon}

\def\Z{\ZZ}

\def\Q{\QQ}

\def\L{\Lambda}
\def\ol{\overline}
\def\ba{\begin{array}}
\def\ea{\end{array}}
\def\bp{\begin{pmatrix}}
\def\ep{\end{pmatrix}}

\def\sp{\operatorname{sp}}

\def\wt{\widetilde}

\def\sm{\setminus}

\def\a{\alpha}
\def\b{\beta}
\def\ll{\langle}
\def\rr{\rangle}

\def\ord{\operatorname{ord}}
\def\be{\begin{equation}} \def\ee{\end{equation}}

\theoremstyle{plain}
\newtheorem{theorem}{Theorem}[section]
\newtheorem{corollary}[theorem]{Corollary}
\newtheorem{lemma}[theorem]{Lemma}

\newtheorem{proposition}[theorem]{Proposition}
\newtheorem*{claim}{Claim}
\newtheorem*{theorem*}{Theorem}
\newtheorem*{theorem11}{Theorem~\ref{theorem:covering-link-lk-one-ribbon-intro}}

\newtheorem*{namedtheorem}{\theoremname}
\newcommand{\theoremname}{testing}

\theoremstyle{definition}

\newtheorem{definition}[theorem]{Definition}
\newtheorem{remark}[theorem]{Remark}


\iftrue
\makeatletter
\def\@maketitle{%
  \normalfont\normalsize
  \@adminfootnotes
  \@mkboth{\@nx\shortauthors}{\@nx\shorttitle}%
  \global\topskip42\p@\relax 
  \@settitle
  \ifx\@empty\authors \else \@setauthors \fi
  \ifx\@empty\@dedicatory
  \else
    \vskip.5em
    \baselineskip18\p@
    \vtop{\raggedright{\footnotesize\itshape\@dedicatory\@@par}%
      \global\dimen@i\prevdepth}\prevdepth\dimen@i
  \fi
  \@setabstract
  \normalsize
  \if@titlepage
    \newpage
  \else
    \dimen@34\p@ \advance\dimen@-\baselineskip
    \vskip\dimen@\relax
  \fi
}
\def\@settitle{%
  \vspace*{-20pt}
  \begin{flushleft}%
    \baselineskip14\p@\relax
    \normalfont\bfseries\LARGE
    \@title
  \end{flushleft}%
}
\def\@setauthors{%
  \begingroup
  \def\thanks{\protect\thanks@warning}%
  \trivlist
  \large \@topsep30\p@\relax
  \advance\@topsep by -\baselineskip
  \item\relax
  \author@andify\authors
  \def\\{\protect\linebreak}%
  \authors
  \ifx\@empty\contribs
  \else
    ,\penalty-3 \space \@setcontribs
    \@closetoccontribs
  \fi
  \normalfont\raggedright
  \@setaddresses
  \endtrivlist
  \endgroup
}
\def\@setaddresses{\par
  \nobreak \begingroup
  \small
  \def\author##1{\nobreak\addvspace\smallskipamount}%
  \def\\{\unskip, \ignorespaces}%
  \interlinepenalty\@M
  \def\address##1##2{\begingroup
    \par\addvspace\bigskipamount\noindent
    \@ifnotempty{##1}{(\ignorespaces##1\unskip) }%
    {\ignorespaces##2}\par\endgroup}%
  \def\curraddr##1##2{\begingroup
    \@ifnotempty{##2}{\nobreak\noindent\curraddrname
      \@ifnotempty{##1}{, \ignorespaces##1\unskip}\/:\space
      ##2\par}\endgroup}%
  \def\email##1##2{\begingroup
    \@ifnotempty{##2}{\nobreak\noindent E-mail address%
      \@ifnotempty{##1}{, \ignorespaces##1\unskip}\/:\space
      \ttfamily##2\par}\endgroup}%
  \def\urladdr##1##2{\begingroup
    \def~{\char`\~}%
    \@ifnotempty{##2}{\nobreak\noindent\urladdrname
      \@ifnotempty{##1}{, \ignorespaces##1\unskip}\/:\space
      \ttfamily##2\par}\endgroup}%
  \addresses
  \endgroup
  \global\let\addresses=\@empty
}
\def\@setabstracta{%
    \ifvoid\abstractbox
  \else
    \skip@25\p@ \advance\skip@-\lastskip
    \advance\skip@-\baselineskip \vskip\skip@
    \box\abstractbox
    \prevdepth\z@ 
    \vskip-10pt
  \fi
}
\renewenvironment{abstract}{%
  \ifx\maketitle\relax
    \ClassWarning{\@classname}{Abstract should precede
      \protect\maketitle\space in AMS document classes; reported}%
  \fi
  \global\setbox\abstractbox=\vtop \bgroup
    \normalfont\small
    \list{}{\labelwidth\z@
      \leftmargin0pc \rightmargin\leftmargin
      \listparindent\normalparindent \itemindent\z@
      \parsep\z@ \@plus\p@
      
    }%
    \item[\hskip\labelsep\bfseries\abstractname.]%
}{%
  \endlist\egroup
  \ifx\@setabstract\relax \@setabstracta \fi
}
\def\section{\@startsection{section}{1}%
  \z@{-1.2\linespacing\@plus-.5\linespacing}{.8\linespacing}%
  {\normalfont\bfseries\Large}}
\def\subsection{\@startsection{subsection}{2}%
  \z@{-.8\linespacing\@plus-.3\linespacing}{.3\linespacing\@plus.2\linespacing}%
  {\normalfont\bfseries}}
\def\subsubsection{\@startsection{subsection}{3}%
  \z@{.7\linespacing\@plus.2\linespacing}{-1.5ex}%
  {\normalfont\itshape}}
\def\@secnumfont{\bfseries}
\makeatother
\fi 

\def\to{\mathchoice{\longrightarrow}{\rightarrow}{\rightarrow}{\rightarrow}}
\makeatletter
\newcommand{\shortxra}[2][]{\ext@arrow 0359\rightarrowfill@{#1}{#2}}
\def\longrightarrowfill@{\arrowfill@\relbar\relbar\longrightarrow}
\newcommand{\longxra}[2][]{\ext@arrow 0359\longrightarrowfill@{#1}{#2}}
\renewcommand{\xrightarrow}[2][]{\mathchoice{\longxra[#1]{#2}}%
  {\shortxra[#1]{#2}}{\shortxra[#1]{#2}}{\shortxra[#1]{#2}}}
\makeatother


\makeatletter
\def\Nopagebreak{\@nobreaktrue\nopagebreak}
\makeatother


\begin{document}
\title{Splitting numbers of links}

\author{Jae Choon Cha}
\address{
  Department of Mathematics\\
  POSTECH\\
  Pohang 790--784\\
  Republic of Korea\\
  and\linebreak
  School of Mathematics\\
  Korea Institute for Advanced Study \\
  Seoul 130--722\\
  Republic of Korea
}
\email{jccha@postech.ac.kr}

\author{Stefan Friedl}
\address{
  Mathematisches Institut\\
  Universit\"at zu K\"oln\\
  50931 K\"oln\\
  Germany}
\email{sfriedl@gmail.com}

\author{Mark Powell}
\address{
  Department of Mathematics\\
  Indiana University \\
  Bloomington, IN 47405\\
  USA
}
\email{macp@indiana.edu}

\def\subjclassname{\textup{2010} Mathematics Subject Classification}
\expandafter\let\csname subjclassname@1991\endcsname=\subjclassname
\expandafter\let\csname subjclassname@2000\endcsname=\subjclassname
\subjclass{%
  57M25, 
  57M27, 
  57N70
}


\begin{abstract}
  The splitting number of a link is the minimal number of crossing
  changes between different components required, on any diagram, to
  convert it to a split link.  We introduce new techniques to compute
  the splitting number, involving covering links and Alexander
  invariants.  As an application, we completely determine the
  splitting numbers of links with 9 or fewer crossings.  Also, with
  these techniques, we either reprove or improve upon the lower bounds
  for splitting numbers of links computed by J.~Batson and C.~Seed
  using Khovanov homology.
\end{abstract}


\maketitle

\section{Introduction}

Any link in $S^3$ can be converted to the split union
of its component knots by a sequence
of crossing changes between different
components.
Following J. Batson and C. Seed~\cite{batson-seed}, we
define the \emph{splitting number} of a link $L$, denoted by $\sp(L)$, as the
minimal number of crossing changes in such a sequence.

We present two new techniques for obtaining lower bounds for the splitting
number.  The first approach uses covering links, and the
second method arises from the multivariable Alexander polynomial of
a link.

Our general covering link theorem is stated as Theorem~\ref{theorem:general-covering-genus-lower-bound}.  Theorem~\ref{theorem:covering-link-lk-one-ribbon-intro} below gives a special case which applies to 2-component links $L$ with unknotted components and odd linking number.  Note that the splitting number is equal to the linking number modulo two.  If we take the 2-fold branched cover of $S^3$ with branching set a component of $L$, then the
preimage of the other component is a knot in $S^3$, which we
call a \emph{2-fold covering knot} of~$L$.  Also recall that the
\emph{slice genus} of a knot $K$ in $S^3$ is defined to be the minimal
genus of a surface $F$ smoothly embedded in $D^4$ such that $\partial
(D^4,F)=(S^3, K)$.

\begin{theorem}\label{theorem:covering-link-lk-one-ribbon-intro}
  Suppose $L$ is a 2-component link with unknotted components.  If
  $\sp(L)=2k+1$, then any 2-fold covering knot of $L$ has slice genus
  at most~$k$.
\end{theorem}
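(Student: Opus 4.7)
The plan is to realize the $2k+1$ crossing changes as an immersed cobordism in the $4$-ball and then pass to the double branched cover to produce an explicit genus-$k$ surface bounded by a $2$-fold covering knot of $L$.

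First I would arrange the splitting sequence so that $L_1$ remains pointwise fixed as a standard unknot in $S^3$ throughout, with all motion concentrated on $L_2$; this is possible because every crossing change is between the two components and hence changes neither component's knot type. The final link is therefore a split union $L_1 \sqcup L_2'$ of two unknots, and in $S^3 \times [0,1]$ the trace becomes a product annulus $\Sigma_1 = L_1 \times [0,1]$ together with an embedded annulus $\Sigma_2$ running from $L_2$ at $t=0$ to $L_2'$ at $t=1$. Each crossing change contributes exactly one transverse intersection of $\Sigma_2$ with $\Sigma_1$, so $|\Sigma_1 \cap \Sigma_2| = 2k+1$. Capping the top of $S^3 \times [0,1]$ with a $4$-ball and choosing disjoint disks $D_1, D_2$ in this cap for the split unknots $L_1, L_2'$, I would form smoothly embedded disks $\Delta_i = \Sigma_i \cup D_i \subset B^4$ with $\partial \Delta_i = L_i$ and $|\Delta_1 \cap \Delta_2| = 2k+1$.

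Next I would take the $2$-fold cover $\widetilde{B^4} \to B^4$ branched over $\Delta_1$. Because $\Sigma_1$ is a product annulus and $D_1$ is a standard slice disk for an unknot, the branched cover decomposes as $(\Sigma_2(L_1) \times [0,1]) \cup B^4 = (S^3 \times [0,1]) \cup B^4 \cong B^4$. The preimage $\widetilde{\Delta_2}$ is an orientable surface smoothly embedded in $\widetilde{B^4} = B^4$ with boundary $\widetilde{L_2} \subset S^3$; concretely it is a connected $2$-fold cover of the disk $\Delta_2$ branched at the $2k+1$ interior intersection points. Since $\widetilde{L_2}$ is a knot (the linking number of $L$ is odd because $\sp(L) = 2k+1$ is odd), $\widetilde{\Delta_2}$ has exactly one boundary component, and Riemann--Hurwitz gives $\chi(\widetilde{\Delta_2}) = 2 \cdot 1 - (2k+1) = 1 - 2k$, forcing its genus to equal $k$. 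By symmetry the same bound holds for the covering knot obtained by branching over $L_2$ instead.

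The hard part will be the identification of the branched cover with the standard $B^4$: for a generic smoothly embedded disk bounding an unknot in $B^4$, the $2$-fold branched cover is only a homotopy $4$-ball, which would merely yield a topological slice bound. What makes the smooth bound work is the freedom, available precisely because all crossing changes are between different components, to arrange $\Sigma_1$ as a \emph{product} annulus, so that its branched cover is manifestly the product $S^3 \times [0,1]$ and the whole branched cover of $B^4$ is diffeomorphic to the standard $B^4$.
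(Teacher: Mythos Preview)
Your argument is correct and constitutes a genuinely different proof from the paper's.

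The paper derives the theorem from a more general statement (Theorem~\ref{theorem:general-covering-genus-lower-bound}): it analyses, in $S^3$, how a crossing change between the branching component $L_1$ and $L_2$ lifts to the double branched cover, and shows via an explicit local picture that it becomes an \emph{internal band sum} on the covering link.  Running the $2k+1$ crossing changes backwards, the covering knot is obtained from two split copies of the unknot $L_2$ by $2k+1$ band sums; capping the two copies with disjoint slice discs and attaching the bands in a collar yields a connected surface in $D^4$ of Euler characteristic $2-(2k+1)=1-2k$, hence genus~$k$.  Your route bypasses the $3$-dimensional band analysis entirely: you build the pair of immersed discs $\Delta_1,\Delta_2\subset B^4$ directly from the trace of the homotopy, pass to the branched double cover of the $4$-ball over $\Delta_1$, and read off the genus of $\widetilde{\Delta_2}$ from Riemann--Hurwitz.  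The two constructions in fact produce the same surface, but seen from different angles.

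What each approach buys: the paper's band-sum analysis generalises immediately to $m$ components and to crossing changes not involving the branching component (the ``Type~B'' case), yielding Corollaries~\ref{corollary:splitting-number-2-covering-link} and~\ref{corollary:covering-link-zero-lk-no} with no extra work.  Your argument is cleaner and more conceptual for the $2$-component case at hand, and you correctly isolate the one genuine subtlety --- that arranging $\Sigma_1$ as a product annulus over a fixed unknot is exactly what makes the branched cover of $B^4$ \emph{smoothly} standard, so that the bound is on the smooth (not merely topological) slice genus.
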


Theorem~\ref{theorem:general-covering-genus-lower-bound} also
has other useful consequences, given in
Corollaries~\ref{corollary:splitting-number-2-covering-link} and
\ref{corollary:covering-link-zero-lk-no}, dealing with the case of even linking numbers, for example.
Three covering link arguments which use these corollaries are
given in Section~\ref{section:arguments-splitting-no-particular-links}.

Our Alexander polynomial method is efficacious for 2-component links
when the linking number is one and at least one
component is knotted.  By looking at the effect of a crossing change
on the Alexander module we obtain the following result:

\begin{theorem}\label{prop:split1-intro}
  Suppose $L$ is 2-component link with Alexander
  polynomial~$\Delta_L(s,t)$.  If $\sp(L)=1$, then $\Delta_L(s,1)\cdot
  \Delta_L(1,t)$ divides $\Delta_L(s,t)$.
\end{theorem}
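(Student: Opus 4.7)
The plan is to realize the crossing change as $\pm 1$ Dehn surgery on an auxiliary unknot, apply Torres' multivariable formula to the resulting three-component link, and use the vanishing of the Alexander polynomial of the post-surgery split link to extract a divisibility.

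First I reduce to $\lk(L_1,L_2)=1$. Since $\sp(L)\equiv|\lk(L_1,L_2)|\pmod 2$ and a between-component crossing change shifts the linking number by $\pm 1$ to land at $0$ on the split link, the hypothesis forces $|\lk(L_1,L_2)|=1$. WLOG $\lk(L_1,L_2)=1$, so Torres gives $\Delta_L(s,1)=\Delta_{L_1}(s)$ and $\Delta_L(1,t)=\Delta_{L_2}(t)$, and the theorem becomes $\Delta_{L_1}(s)\Delta_{L_2}(t)\mid\Delta_L(s,t)$. Since these factors sit in disjoint variables they are coprime in the UFD $\mathbb{Z}[s^{\pm 1},t^{\pm 1}]$, so it suffices to prove each divisibility separately.

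Realize the crossing change as $\varepsilon$-framed surgery ($\varepsilon=\pm 1$) on an unknot $U\subset S^3\setminus L$ with $\lk(U,L_i)=1$, converting $L$ to a split link $L'=L_1\sqcup L_2$. The common exterior $E=S^3\setminus\nu(L\cup U)$ is also the exterior of $L'\cup\tilde U$ for $\tilde U$ the core of the new solid torus, and the new meridian satisfies $\tilde u=u(st)^{\varepsilon}$ in $H_1(E)$. Torres applied to $L\cup U$ dropping $U$ gives $\Delta_{L\cup U}(s,t,1)=(st-1)\Delta_L(s,t)$, while Torres on $L'\cup\tilde U$ dropping $\tilde U$ together with $\Delta_{L'}=0$ gives $\Delta_{L'\cup\tilde U}(s,t,1)=0$. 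Translating through the change of variables yields $(u(st)^{\varepsilon}-1)\mid\Delta_{L\cup U}(s,t,u)$; writing $\Delta_{L\cup U}(s,t,u)=(u(st)^{\varepsilon}-1)\,Q(s,t,u)$ and setting $u=1$ identifies $Q(s,t,1)\doteq\Delta_L(s,t)$ up to units.

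To produce the divisibility by $\Delta_{L_1}(s)$, I would apply Torres to $L'\cup\tilde U$ dropping the other split component $L_2$: using $\lk(L_1,L_2)=0$ in $L'$ and $\lk(L_2,\tilde U)=1$, this reads $\Delta_{L'\cup\tilde U}(s,1,\tilde u)=(\tilde u-1)\Delta_{L_1\cup\tilde U}(s,\tilde u)$, and Torres applied to $L_1\cup\tilde U$ further gives $\Delta_{L_1\cup\tilde U}(s,1)=\Delta_{L_1}(s)$. A symmetric argument handles $\Delta_{L_2}$. The main obstacle is that these edge evaluations of $Q$, namely $Q(s,1,1)\doteq\Delta_{L_1}(s)$ and $Q(1,t,1)\doteq\Delta_{L_2}(t)$, do not in themselves force the desired divisibility $\Delta_{L_1}(s)\mid Q(s,t,1)=\Delta_L(s,t)$. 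I expect to close the argument by a Mayer--Vietoris analysis of $H_1(M_L;\Lambda)$ for $\Lambda=\mathbb{Z}[s^{\pm 1},t^{\pm 1}]$ via the decomposition $M_L=E\cup V$, exhibiting the $\Delta_{L_i}$-torsion submodule inherited from the split structure on $L'$ (coming from the Alexander module of each $L_i$, viewed as a knot in a ball inside $M_{L'}$) as a $\Lambda$-submodule of $H_1(M_L;\Lambda)$ of order $\Delta_{L_1}(s)\Delta_{L_2}(t)$, whose existence forces the required divisibility of orders.
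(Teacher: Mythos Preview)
Your Torres-formula setup is correct as far as it goes, and you are honest that it stalls: knowing $Q(s,t,1)\doteq\Delta_L(s,t)$ together with the edge values $Q(s,1,1)\doteq\Delta_{L_1}(s)$ and $Q(1,t,1)\doteq\Delta_{L_2}(t)$ genuinely does \emph{not} force $\Delta_{L_1}(s)\mid Q(s,t,1)$.  So the first two-thirds of the proposal, while not incorrect, does no work toward the theorem.

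The real content is deferred to your last two sentences, and there the sketch has a gap.  You assert that the Alexander modules of the $L_i$ assemble into a $\Lambda$-submodule of $H_1(M_L;\Lambda)$; this is not what one actually gets, and it is not clear it is true.  What the Mayer--Vietoris decomposition $M_L=E\cup V$, $M_{L'}=E\cup V'$ produces is a common module $M=H_1^\psi(E;\Lambda)$ with surjections onto both $H_1^\psi(M_L;\Lambda)$ and $H_1^\psi(M_{L'};\Lambda)$, each with kernel generated by a single element (this is Proposition~\ref{prop:alexmodulecrossingchange} of the paper).  Separately, Mayer--Vietoris along the splitting sphere gives an injection $H_1^\psi(S^3\sm L_1;\Lambda)\oplus H_1^\psi(S^3\sm L_2;\Lambda)\hookrightarrow H_1^\psi(M_{L'};\Lambda)$.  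The missing step---and it is the heart of the proof---is to show that this injection survives after killing the image of the single $\Lambda$-generator coming from the crossing change, so that $H_1^\psi(S^3\sm L_1;\Lambda)\oplus H_1^\psi(S^3\sm L_2;\Lambda)$ embeds in a \emph{quotient} of $H_1^\psi(M_L;\Lambda)$ (not in $H_1^\psi(M_L;\Lambda)$ itself).  The paper does this by exploiting the connecting map $h\colon H_1^\psi(M_{L'};\Lambda)\to H_0(S;\Lambda)\cong\Lambda$ from the splitting-sphere sequence: one shows the offending $\Lambda$-generator has nonzero image under $h$ (using that $\Delta_L\ne 0$), hence lies transverse to $\ker h$, which is exactly the image of $H_1^\psi(S^3\sm L_1;\Lambda)\oplus H_1^\psi(S^3\sm L_2;\Lambda)$.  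Without this transversality argument your divisibility claim does not follow, and your proposal gives no indication of how to supply it.
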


We will use elementary methods explained in Lemma~\ref{lem:splittingnumberbasics} and our techniques from covering links and Alexander polynomials to obtain lower bounds on the splitting number for links with 9 or fewer crossings.
Together with enough patience with
link diagrams, this is sufficient to determine the
splitting number for all of these links.  Our results for links up
to 9 crossings are summarised by Table~\ref{table:splitting-numbers}
in Section~\ref{section:table-splitting-no-results}.

In~\cite{batson-seed}, Batson and Seed defined a spectral sequence from
the Khovanov homology of a link $L$ which converges to the Khovanov
homology of the split link with the same components as~$L$. They
showed that this spectral sequence gives rise to a lower bound on
$\sp(L)$, and by computing it for links up to 12 crossings, they gave
many examples for which this lower bound is strictly stronger than the
lower bound coming from linking numbers.  They determined the
splitting number of some of these examples, while some were left
undetermined.

We revisit the examples of Batson and Seed and show that our methods
are strong enough to recover their lower bounds.  Furthermore we show
that for several cases our methods give more information.  In
particular, we completely determine the splitting numbers of all the
examples of Batson and Seed.  We refer the reader to
Section~\ref{section:batson-seed-examples} for more details.

\subsection*{Organisation of the paper}

We start out, in Section \ref{section:basics}, with some basic
observations on the splitting number of a link.
In Section~\ref{section:covering-link-technique} we prove Theorem~\ref{theorem:general-covering-genus-lower-bound}, which is a general result on the effect of crossing changes on covering links,
 and then we provide an
example in Section~\ref{section:exampleII}.  We give a proof of
Theorem~\ref{prop:split1-intro} in
Sections~\ref{section:effect-on-alexander-modules} and~\ref{section:alex-poly-obstructions-to-splitting} and we
illustrate its use with an example in
Section~\ref{section:splitting-no-examples}.  The examples of Batson
and Seed are discussed in Section~\ref{section:batson-seed-examples},
with Section~\ref{section:alex-poly-batson-seed} focussing on examples
which use Theorem~\ref{prop:split1-intro}, and
Section~\ref{section:batson-seed-covering-link-technique} on examples
which require Theorem~\ref{theorem:covering-link-lk-one-ribbon-intro}.
A 3-component example of Batson and Seed is discussed in
Section~\ref{section:batson-seed-3-comp-example}.  Next, our results
on the splitting numbers of links with 9 crossings or fewer are given
in Section~\ref{section:table-splitting-no-results}, with some particular
arguments used to obtain these results described in
Section~\ref{section:arguments-splitting-no-particular-links}.

\subsection*{Acknowledgements}

Part of this work was completed while the authors were on visits, JCC and SF
at Indiana University in Bloomington and MP at the Max Planck
Institute for Mathematics in Bonn.  The authors thank the
corresponding institutions for their hospitality.  We are particularly
grateful to Daniel Ruberman whose suggestion to work on splitting numbers was invaluable.
We would also like to thank Maciej Borodzik, Matthias Nagel, Kent Orr and Raphael
Zentner for their interest and suggestions.  Julia Collins and Charles
Livingston provided us with a Maple program to compute twisted
Alexander polynomials (also written by Chris Herald and Paul Kirk),
and helped us to use it correctly.

JCC was partially supported by NRF grants 2010--0029638 and
2012--0009179.  MP gratefully
acknowledges an AMS-Simons travel grant which helped with his travel
to Bonn.

\section{Basic observations}\label{section:basics}

A link is \emph{split} if it is a split union of knots.  We recall from the introduction that the
\emph{splitting number $\sp(L)$} of a link $L$ is defined to be the
minimal number of crossing changes which one needs to make on $L$,
each crossing change between different components, in order to obtain
a split link.

We note that this differs from the definition of `splitting number' which occurs in
\cite{Adams96, Shimizu12}; in these papers crossing changes of a
component with itself are permitted.

Given a link $L$ we say that a non-split sublink with all of the linking numbers zero is \emph{obstructive}.  (All obstructive sublinks which occur in the applications of this paper will be Whitehead links.)  We then define $c(L)$
to be the maximal size of a collection of distinct obstructive sublinks of $L$, such that any two sublinks in the collection have at most one component in common.  Note that~$c$ is zero for trivial links.

 As another example consider the link $L9a54$ shown in Figure~\ref{figure:L9a54}.  The sublink $L_1 \sqcup L_3$ is an unlink, while both $L_1 \sqcup L_2$ and $L_2 \sqcup L_3$ are Whitehead links, hence are obstructive.  Thus $c(L)=2$.

\begin{figure}[H]
      \labellist
      \small\hair 0mm
      \pinlabel {$L_1$} at 15 90
      \pinlabel {$L_2$} at 180 150
      \pinlabel {$L_3$} at 345 90
      \endlabellist
      \begin{center}
      \includegraphics[scale=.55]{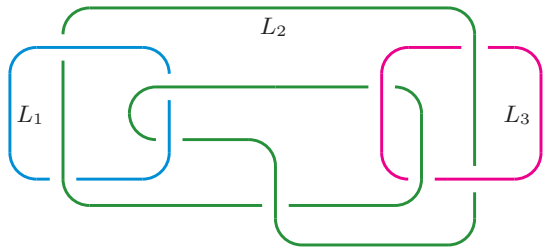}
      \end{center}
      \caption{The link $L9a54$.}
      \label{figure:L9a54}
\end{figure}

Finally we discuss the link $J$ in Figure~\ref{figure:L9a46-covering-link-over-L3}. It
has four components $J_1,J_2,J_3,J_4$ and $J_1\cup J_3$ and $J_2\cup
J_4$ each form a Whitehead link. It follows that $c(J)=2$.

In practice it is straightforward to obtain lower bounds for $c(L)$.  In most cases it is also not too hard to determine $c(L)$ precisely.

Now we have the following elementary lemma.

\begin{lemma}\label{lem:splittingnumberbasics}
Let $L=L_1\,\sqcup \dots\sqcup\, L_m$ be a link. Then
\[ \sp(L)\equiv \sum_{i>j} \lk(L_i,L_j) \mod 2\]
and
\[ \sp(L)\geq \sum_{i>j} |\lk(L_i,L_j)|+2c(L).\]
\end{lemma}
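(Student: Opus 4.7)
The two assertions come from the same source: a sequence of crossing changes realising $\sp(L)$ can be bookkept pair by pair. My plan is to introduce, for any realising sequence, the numbers $n_{ij}$ recording how many of the crossing changes occur between components $L_i$ and $L_j$, so that $\sp(L)=\sum_{i<j}n_{ij}$, and then read off both statements from elementary constraints on the $n_{ij}$.

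For the congruence (first statement), the key observation is that a single crossing change between $L_i$ and $L_j$ alters $\lk(L_i,L_j)$ by $\pm 1$ and leaves every other linking number untouched. Thus $\sum_{i>j}\lk(L_i,L_j)$ changes by $\pm 1$ at each step and must reach $0$ after $\sp(L)$ steps, giving $\sp(L)\equiv\sum_{i>j}\lk(L_i,L_j)\pmod 2$. In particular, for each pair $i<j$, $n_{ij}\equiv \lk(L_i,L_j)\pmod 2$ and $n_{ij}\geq |\lk(L_i,L_j)|$.

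For the inequality, set $a_{ij}:=n_{ij}-|\lk(L_i,L_j)|$, which is a non-negative even integer by the previous paragraph, so that $\sp(L)=\sum_{i<j}|\lk(L_i,L_j)|+\sum_{i<j}a_{ij}$. It therefore suffices to show $\sum_{i<j}a_{ij}\geq 2c(L)$. Let $\mathcal{S}$ be a collection of $c(L)$ obstructive sublinks in which any two members share at most one component. For each $S\in\mathcal{S}$, restricting the realising sequence to the crossing changes between components of $S$ produces a splitting sequence for $S$, hence $\sum_{\{i<j\,:\,L_i,L_j\subseteq S\}} n_{ij}\geq \sp(S)$. Since $S$ is obstructive, its pairwise linking numbers vanish, so the same sum equals $\sum_{\{i<j\,:\,L_i,L_j\subseteq S\}}a_{ij}$, and part one applied to $S$ together with non-splitness forces $\sp(S)\geq 2$.

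It remains to combine these sublink-level inequalities. The hypothesis that any two distinct members of $\mathcal{S}$ share at most one component means that no pair $\{L_i,L_j\}$ is contained in more than one $S\in\mathcal{S}$; this is the main, though modest, obstacle and is where the definition of $c(L)$ is used decisively. Summing over $\mathcal{S}$ and interchanging the order of summation yields
\[
2c(L)\leq \sum_{S\in\mathcal{S}}\sum_{\{i<j\,:\,L_i,L_j\subseteq S\}} a_{ij} \leq \sum_{i<j} a_{ij},
\]
and the inequality of the lemma follows. I expect the write-up to be short, with the only care needed being the verification that any subsequence of crossing changes within $S$ really does split $S$, which is immediate from the definition of a crossing change.
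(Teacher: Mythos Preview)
Your argument is correct, and it is organised differently from the paper's. The paper proves the inequality by a monovariant: setting $a(L)=\sum_{i>j}|\lk(L_i,L_j)|$, it shows that $a(L)+2c(L)$ drops by at most one under any single crossing change between distinct components, via a case split on whether the pair involved has zero or nonzero linking number (in the nonzero case every member of a collection realising $c(L)$ avoids that pair and survives; in the zero case at most one member contains the pair and may be lost). Since $a+2c$ vanishes on a split link, the bound follows. You instead fix a minimal splitting sequence once and for all, record the counts $n_{ij}$, and combine the per-pair estimate $n_{ij}\ge|\lk(L_i,L_j)|$ with the per-sublink estimate $\sum_{\{i<j:L_i,L_j\subset S\}}n_{ij}\ge\sp(S)\ge 2$; the pair-disjointness of the collection $\mathcal{S}$ lets you sum. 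Your route avoids having to track how $c$ evolves along the sequence (in particular, you never need to argue that obstructive sublinks persist under individual moves), at the cost of not producing the slightly stronger step-by-step statement that $a+2c$ is a genuine monovariant. For the lemma as stated the two approaches are of comparable length and both are entirely adequate.
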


\begin{proof}
Given a link $L$ we write
\[ a(L)=\sum_{i > j}|\lk(L_i,L_j)|.\]
Note that a crossing change between two different components always changes the value of $a$ by precisely one. Since $a$ of the unlink is zero we immediately obtain the first statement.

If we do a crossing change between two components with non-zero
linking number, then $a$ goes down by at most one, whereas $c$ stays
the same or increases by one.
On the other hand, if we do a crossing change between two components with zero linking number, then $a$ goes up by one and $c$ decreases by at most one, since the two components belong to at most one obstructive sublink in any maximal collection whose cardinality realises $c(L)$. It now follows that $a(L)+2c(L)$ decreases with each crossing change between different components by at most one.
\end{proof}

The right hand side of the second inequality is greater than or equal to the lower bound $b_{\lk}(L)$ of~\cite[Section~5]{batson-seed}.  In some cases the lower bound coming from Lemma~\ref{lem:splittingnumberbasics} is stronger.  For example, let $L$ be two split copies of the Borromean rings.  For this $L$ we have $c(L)=2$, giving a sharp lower bound on the splitting number of $4$, whereas $b_{\lk}(L)=2$.

\section{Covering link calculus}

In this section, first we prove our main covering link result, Theorem~\ref{theorem:general-covering-genus-lower-bound}, showing that covering links can be used to give lower bounds on the splitting number.  Then we show how to extract Theorem~\ref{theorem:covering-link-lk-one-ribbon-intro} and three other useful corollaries from Theorem~\ref{theorem:general-covering-genus-lower-bound}.  In Section~\ref{section:exampleII} we present an example of this approach.

\subsection{Crossing changes and covering links}\label{section:covering-link-technique}

The following definition is a special case of the notion of a covering
link occurring in~\cite[Method~5]{Kohn93} and \cite{Cha-Kim:2008-1}, for example.

\begin{definition}\label{defn:covering-link-and-height}
  Let $L=L_1\,\sqcup \dots \sqcup \, L_m$ be an $m$-component link
  with $L_i$ unknotted.  We denote the double branched cover of $S^3$
  with branching set the unknot $L_i$ by $p \colon S^3\to S^3$.  We
  refer to $p^{-1}(L\sm L_i)$ as the \emph{2-fold covering link of $L$
    with respect to~$L_i$}.
\end{definition}

We note that a choice of orientation of a link induces an orientation
of its covering links.


In the theorem below we use the term \emph{internal band sum} to refer to the operation on an oriented link $L$ described as follows.  The data for the move is an embedding $f \colon D^1 \times D^1 \subset S^3$ such that $f(D^1 \times D^1) \cap L = f(\{-1,1\} \times D^1)$, the orientation of $f(\{-1\} \times D^1)$ agrees with that of $L$ and the orientation of $f(\{1\} \times D^1)$ is opposite to that of $L$.  The output is a new oriented link given by $(L \setminus f(\{-1,1\} \times D^1)) \cup f(D^1 \times \{-1,1\})$, after rounding corners.  The new link has the orientation induced from~$L$.

\begin{theorem}
  \label{theorem:general-covering-genus-lower-bound}
  Let $L=L_1\sqcup\cdots\sqcup L_m$ be an $m$-component link and suppose that $L_i$ is unknotted for some fixed~$i$.  Fix
  an orientation of~$L$.  Suppose $L$ can be transformed to a split link
  by $\a+\b$ crossing changes involving distinct components, where $\a$
  of these involve $L_i$ and $\b$ of these do not involve~$L_i$.  Then
  the 2-fold covering link $J$ of $L$ with respect to $L_i$ can be altered by performing $\a$ internal band sums and $2\b$~crossing changes between different components to the split union of $2(m-1)$ knots comprising two copies of $L_j$, for each $j \neq i$.
 \end{theorem}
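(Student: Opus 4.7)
The plan is to realise the $\alpha+\beta$ crossing changes as a sequence of local moves, each supported in a small $3$-ball (a crossing ball), lift each such move one at a time to the branched double cover $p\colon \Sigma_2(L_i) \cong S^3 \to S^3$, and read off the induced operation on $J = p^{-1}(L \sm L_i)$. After all the lifted moves have been performed, we will have arrived at the covering link of the split link $L^{\mathrm{split}}$ obtained from $L$ by the given crossing changes. In $L^{\mathrm{split}}$ the component $L_i$ splits off from the rest, so the other components lie in a $3$-ball disjoint from $L_i$; this ball has two disjoint preimages in the cover, and hence the covering link of $L^{\mathrm{split}}$ is exactly the split union of two copies of each $L_j$ for $j \neq i$. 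The real work is to identify the effect on $J$ of lifting a single crossing change of each type.

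For each of the $\beta$ crossing changes not involving $L_i$, take the crossing ball $B_c$ to be disjoint from $L_i$. Then $p^{-1}(B_c)$ consists of two disjoint $3$-balls, each mapped homeomorphically to $B_c$, and the crossing change in $B_c$ lifts to the same local move inside each preimage ball. Each such lifted crossing involves a strand of $p^{-1}(L_j)$ and a strand of $p^{-1}(L_k)$ with $j,k \neq i$ distinct, so both lifted crossings are between different components of $J$. This gives the $2\beta$ crossing changes in the statement.

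For each of the $\alpha$ crossing changes involving $L_i$ and some $L_j$, take the crossing ball $B_c$ to meet $L_i$ in an arc of the branch set and to meet $L_j$ in an arc disjoint from $L_i$. Then $p^{-1}(B_c)$ is the double cover of a $3$-ball branched over an unknotted arc, i.e.\ a single $3$-ball. The preimage of $L_j \cap B_c$ consists of two properly embedded arcs with a total of four endpoints on $\partial p^{-1}(B_c)$, and these four endpoints are fixed throughout the local move. Using the explicit local model $(x,u,v) \mapsto (x,\, u^2-v^2,\, 2uv)$ for the branched cover, one tracks preimages via the square-root function $\sqrt{y+iz}$ and verifies that the ``before'' and ``after'' tangles correspond to the two different matchings of the four fixed endpoints by a pair of arcs. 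The change from one matching to the other is precisely the effect of a saddle move along a band embedded in $p^{-1}(B_c)$ whose two ends lie on the two lifted $L_j$-arcs. With orientations induced from $L$ via $p$, the deck transformation reverses the relative orientation across the band, so one end of the band is attached to its arc compatibly with the orientation of $J$ and the other end oppositely; this is exactly the notion of an internal band sum.

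The main obstacle is the local computation in the second case: showing that the lift of a crossing change involving $L_i$ is a single band surgery (rather than, say, two crossing changes, or two band surgeries) and checking the sign conventions so that this band surgery satisfies the orientation requirement in the definition of an internal band sum. Once this is settled, all the lifted moves are supported in pairwise disjoint preimage balls, so they commute and can be performed independently, and their composition carries $J$ to the covering link of $L^{\mathrm{split}}$, which is the required split union of $2(m-1)$ knots.
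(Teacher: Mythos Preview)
Your proposal is correct and follows essentially the same strategy as the paper: analyse the lift of each crossing change locally, splitting into the two cases according to whether the branching component $L_i$ is involved, and then observe that the covering link of the resulting split link is the split union of two copies of each~$L_j$. The paper presents the Type~B case via $\pm 1$ Dehn surgery on a small unknotted circle (whose preimage is two disjoint circles) and the Type~A case via an explicit picture, whereas you argue Type~B by lifting the crossing ball directly and Type~A via the coordinate model $(x,u,v)\mapsto(x,u^2-v^2,2uv)$ for the branched double cover; these are equivalent packagings of the same local computation.
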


  \begin{proof}
  We may assume $i=1$.  We begin by investigating the effect of
  crossing changes on the 2-fold covering link with respect to the
  first component $L_1$ of a link~$L$.

  \begin{trivlist}
  \item[] \emph{Type A.}  First we consider crossing changes between
    the branching component $L_1$ and another component, say~$L_2$.
    Such a crossing change lifts to a rotation of the preimage $J$ of
    $L_2$ around the lift $\wt{L_1}$ of $L_1$, as shown in
    Figure~\ref{figure:2-fold-cover-branching-set-ribbon}.  The top
    left and middle left diagrams show a link before and after a
    crossing change, in a cylindrical neighbourhood which contains an
    interval from each of $L_1$ and $L_2$.  To branch over $L_1$,
    which is the component running down the centre of the cylinders,
    cut along the surface which is shown in the diagrams.  The results
    of taking the top left and middle left diagrams, cutting, and
    glueing two copies together, are shown in the top right and middle
    right diagrams respectively.

    \begin{figure}[H]
      \labellist
      \small\hair 0mm
      \pinlabel {$L_2$} at 23 568
      \pinlabel {$L_1$} at 68 568
      \pinlabel {$J$} at 173 568
      \pinlabel {$\wt{L_1}$} at 218 568
      \pinlabel {$J$} at 258 568
      \endlabellist
      \begin{center}
        \includegraphics[scale=.55]{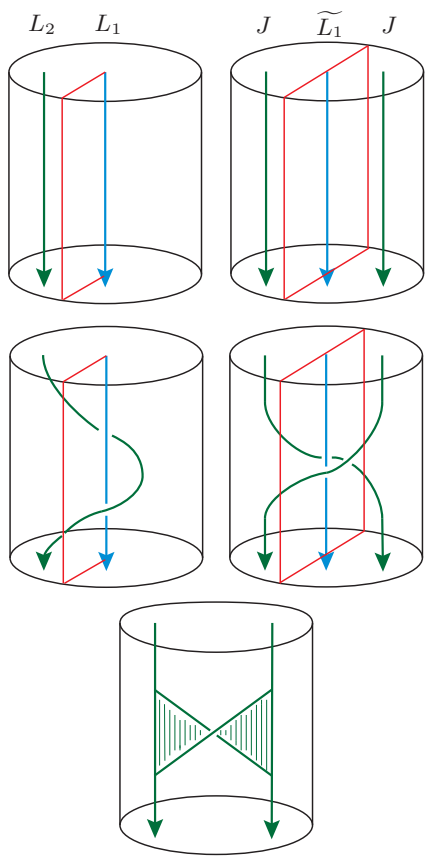}
      \end{center}
      \caption{The effect of a crossing change on a 2-fold covering link where one component is the branching set.}
      \label{figure:2-fold-cover-branching-set-ribbon}
    \end{figure}

    After forgetting the branching set, the same effect on the lift of
    $L_2$ can be achieved by adding a band to $J$; see the bottom
    diagram of Figure~\ref{figure:2-fold-cover-branching-set-ribbon}.
    By ignoring the band, we obtain the top right diagram with the
    branching component removed.  If we instead use the band to make an internal band sum, we obtain the middle right diagram with the branching
    set removed.  Note that this band is attached to $J$ in such a way
    that orientations are preserved.  This holds no matter
    what choice of orientations were made for~$L$.  Thus we see that a crossing change between
    $L_1$ and $L_2$ corresponds to an internal band sum
    on the covering link.
  \end{trivlist}

  \begin{trivlist}
  \item[] \emph{Type B.}  Consider a crossing change which does not
    involve $L_1$, say between $L_2$ and~$L_3$.  Such a crossing
    change can be realised by $\pm 1$ Dehn surgery on a circle which
    has zero linking number with $L$ and which bounds an embedded
    disc, say $D$, in $S^3$ that intersects $L$ in two points of opposite signs, one point of $L_2$
    and one point of~$L_3$.  By performing the Dehn surgeries, and
    then taking the branched cover over $L_1$, we produce the covering
    link of the link obtained by the crossing change.

    Note that the preimage of the disc $D$ in the double branched
    cover consists of 2 disjoint discs, each of which intersects the covering link
    transversally in two points with opposite signs, one point of the preimage of~$L_2$
    and one point of the preimage of~$L_3$.  As an
    alternative construction, we can take the branched cover and then
    perform $\pm1$ Dehn surgeries along the boundary circles of the
    preimage discs.  This gives the same the covering link.  From
    this it follows that a single crossing change between $L_2$ and
    $L_3$ corresponds to two crossing changes on the covering link.
  \end{trivlist}

  Note that when there is more than one crossing change, of either type, the corresponding surgery discs and bands associated to the covering link are disjoint.

  Recall that the link $L$ can be altered to become the split union of
  $m$ knots $L_1,\ldots,L_m$ by $\a$ crossing changes of Type~A and $\b$
  crossing changes of Type~B\hbox{}.  By the above arguments, the
  2-fold covering link of $L$ with respect to the first component $L_1$
  can be altered to become the corresponding covering link of the split link,
  which is the split union $L_2 \,\sqcup\, L_2\, \sqcup \cdots \sqcup\, L_m
  \,\sqcup\, L_m$, by $\a$ internal band sums and $2\b$ crossing changes.  
\end{proof}

In the following result, $g_4(K)$ denotes the slice genus of a knot
$K$ in~$S^3$, namely the minimal genus of a smoothly embedded
connected oriented surface in $D^4$ whose boundary is~$K$.

  \begin{corollary}\label{cor:covering-genus-lower-bound}
  Under the same hypotheses as Theorem~\ref{theorem:general-covering-genus-lower-bound},
   the 2-fold covering link of $L$ with respect to $L_i$ bounds a
  smoothly embedded oriented surface $F$ in $D^4$ which has no closed
  components and has Euler characteristic
  \[
  \chi(F)=2(m-1)-\a -4\b - 4\sum_{k\ne i} g_4(L_k).
  \]
  In addition, if there is some $j\ne i$ such that each $L_k$ with
  $k\ne j$ is involved in some crossing change with $L_j$, then $F$ is
  connected.
\end{corollary}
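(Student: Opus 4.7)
The plan is to promote the sequence of band sums and crossing changes from Theorem~\ref{theorem:general-covering-genus-lower-bound} to a cobordism in a collar $S^3\times[0,1]\subset D^4$, with $J$ on the outer boundary and the split union $\bigsqcup_{k\ne i}(L_k\sqcup L_k)$ on the inner boundary, and then to cap off each split link component by a minimal-genus slice surface embedded disjointly in the complementary inner $4$-ball. The resulting oriented surface $F\subset D^4$ has boundary $J$.

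For the Euler characteristic, I would use that an internal band sum is a 1-handle attachment ($\chi=-1$) and a crossing change is realised by a cobordism of Euler characteristic $-2$ (two bands). Starting from $J\times[0,1]$ (with $\chi=0$), the cobordism $F_1$ from $J$ to the split link satisfies $\chi(F_1) = -\a - 2\cdot 2\b = -\a - 4\b$. Gluing the $2(m-1)$ caps, each of Euler characteristic $1-2g_4(L_k)$, along the split link (a closed 1-manifold of $\chi=0$) then produces the stated formula.

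For the absence of closed components: since $F_1$ is built from $J\times[0,1]$ by attaching 1-handles, every component of $F_1$ meets $J\times\{0\}$, and the caps are attached only along the split link end of $F_1$. Hence every component of $F$ touches $J$.

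Connectedness under the extra hypothesis is the main content. Since each cap is a connected surface attached along a single boundary circle, it suffices to show $F_1$ itself is connected. I would track how the bands merge preimage components: the Type~A crossing change between $L_i$ and $L_j$, guaranteed by taking $k=i$ in the hypothesis, yields an internal band sum on the preimage of $L_j$, while each Type~B crossing change between $L_j$ and $L_k$ with $k\ne i,j$ lifts to two crossing changes in the cover whose bands connect preimages of $L_j$ to preimages of~$L_k$. Iterating, every preimage component of every $L_k$ is joined by a chain of bands to the preimage of~$L_j$. The delicate point, and the main obstacle, is that each $L_k$ may lift to either one or two components of $J$ depending on the parity of $\lk(L_k,L_i)$, so one must check carefully that all $2(m-1)$ split link boundary circles of $F_1$ indeed lie in a single component; this reduces to a combinatorial connectedness check on the resulting band graph.
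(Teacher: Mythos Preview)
Your approach is essentially the paper's: cap minimal-genus slice surfaces for the split components with the trace, in a collar, of the band sums and crossing changes supplied by Theorem~\ref{theorem:general-covering-genus-lower-bound}.  The Euler-characteristic bookkeeping is identical.  The only cosmetic difference is that the paper realises each of the $2\b$ crossing changes as a clasp in an immersed trace and then resolves it with a twisted annulus (contributing $-2$ to $\chi$), whereas you realise it by two saddles; both constructions give the same contribution and both join the surface pieces of the two strands involved.

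Where your write-up diverges is in the connectedness argument, and the ``delicate point'' you flag is an artefact of arguing from the $J$ side.  The paper instead argues from the split-link end of the cobordism, where there are always exactly two copies of each $L_k$, independent of the parity of $\lk(L_k,L_i)$.  From that side the picture is transparent: the Type~A crossing change between $L_i$ and $L_j$ contributes a band joining the two copies of~$V_j$; a Type~B crossing change between $L_j$ and $L_k$ lifts to two crossing changes, one connecting each copy of $V_j$ to a copy of~$V_k$.  Under the hypothesis, every piece is therefore joined to $\{V_j^{(1)},V_j^{(2)}\}$, and these two are joined to each other by the guaranteed Type~A move.  Since your $F_1$ is the very same surface read from the other boundary, you may simply run this argument instead; the ``combinatorial check on the band graph'' you anticipate then becomes immediate, with no case analysis on parities.
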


\begin{proof}
Once again we may assume that $i=1$.  Let $J$ be the 2-fold covering link of $L$ with respect to $L_1$.

An internal band sum can be inverted by performing another band sum, while the inverse of a crossing change is also a crossing change.  Hence by Theorem~\ref{theorem:general-covering-genus-lower-bound} we can also obtain the covering link~$J$ from the split union $L_2 \,\sqcup\, L_2\, \sqcup \cdots \sqcup\, L_m  \,\sqcup\, L_m$ by performing $\a$ internal band sums and $2\b$ crossing changes.

Choose surfaces $V_j$ embedded in $D^4$ with $\partial V_j = L_j$ and genus
  $g_4(L_j)$.  Take a split union $V_2\, \sqcup \, V_2\, \sqcup \cdots
  \sqcup\, V_m\, \sqcup\, V_m$ in $D^4$.  The boundary of these surfaces is the split union $L_2 \,\sqcup\, L_2\, \sqcup \cdots \sqcup\, L_m
  \,\sqcup\, L_m$.  The covering link $J$ can be realised as the boundary of a surface obtained from the split union of the surfaces by attaching $\a$ bands and $2\b$ clasps in $S^3$.  As pointed out in the proof of Theorem~\ref{theorem:general-covering-genus-lower-bound}, the surgery discs and bands associated to crossing changes are disjoint.  Pushing slightly into $D^4$,
  we obtain an immersed surface in $D^4$ bounded by~$J$; each clasp
  gives a transverse intersection.  As usual, we remove the
  intersections by cutting out a disc neighbourhood of the intersection
  point from each sheet and glueing a twisted annulus which is a
  Seifert surface for the Hopf link.  This gives a smoothly embedded
  oriented surface $F$ in $D^4$ bounded by the covering link~$J$.
  Note that each band attached changes the Euler characteristic
  of the surface by $-1$, while each twisted annulus used to remove an
  intersection point changes the Euler characteristic by~$-2$.
  Therefore the resulting surface $F$ has Euler characteristic
  \[
  \chi(F) = \sum_{k=2}^m 2\cdot (1-2g_4(V_k)) - \a - 4\b
  \]
  which is equal to the claimed value.

The final conclusion of the corollary states (when $i=1$) that $F$ is connected if there is some $j\ne 1$ such that each $L_k$ with
  $k\ne j$ is involved in some crossing change with $L_j$.  To see this, observe that a crossing change involving
  $L_j$ and $L_1$ joins the two copies of $V_j$; a crossing change
  involving $L_j$ and $L_k$ with $j$, $k \geq 2$ joins one of the
  two copies of $V_j$ to one of the two copies of $V_k$ and joins
  the other copy of $V_j$ to the other copy of~$V_k$.  Under the
  hypothesis, it follows that $F$ is connected.
\end{proof}

Corollary~\ref{cor:covering-genus-lower-bound} has some useful
consequences of its own.  Considering the case
of $m=2$, $\a=2k+1$, $\b=0$, and $g_4(L_k)=0$, we obtain
Theorem~\ref{theorem:covering-link-lk-one-ribbon-intro} stated in the
introduction.

\begin{theorem11}
Suppose $L$ is a 2-component link with unknotted
  components.  If $\sp(L)=2k+1$, then any 2-fold covering knot of $L$
  has slice genus at most~$k$.
\end{theorem11}

\begin{remark}\leavevmode\Nopagebreak
\begin{enumerate}
\item In the proof of
Corollary~\ref{cor:covering-genus-lower-bound}, when $\b=0$,
we construct an embedded surface $F$ without local maxima.
Therefore in order to show, using
Theorem~\ref{theorem:covering-link-lk-one-ribbon-intro}, that a link of linking number one with
unknotted components has splitting number at least three, it suffices
 to show that the covering link is not a ribbon knot.
\item Different choices of orientation on a link $J$ can change the minimal genus of a connected surface which $J$ bounds in $D^4$.  Since the splitting number is independent of orientations, in applications we will choose the orientation which gives the strongest lower bound.  This remark will be relevant in Section~\ref{section:batson-seed-3-comp-example}.
\item  If $L$ is a non-split 2-component link, then the surface $F$ of Corollary~\ref{cor:covering-genus-lower-bound}
is automatically connected, by the last sentence of that corollary.
\end{enumerate}
\end{remark}

The following is another useful consequence of Corollary~\ref{cor:covering-genus-lower-bound}.

\begin{corollary}
  \label{corollary:splitting-number-2-covering-link}
  Suppose $L$ is a 2-component link with unknotted components and
  $\sp(L)=2$.  Then any 2-fold covering link of $L$ is weakly slice;
  that is, bounds an annulus smoothly embedded in~$D^4$.
\end{corollary}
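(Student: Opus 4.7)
The plan is to specialize Corollary~\ref{cor:covering-genus-lower-bound} to the setting $m=2$ with $\sp(L)=2$, and then read off that the surface one gets is forced to be an annulus.

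First I would observe that since $L$ has only two components, every crossing change between distinct components automatically involves the branching component. So when we branch over $L_i$, we have $\a=2$ and $\b=0$ in the notation of Theorem~\ref{theorem:general-covering-genus-lower-bound}. Because both components are unknotted, $g_4(L_k)=0$ for $k\ne i$. Plugging into the formula of Corollary~\ref{cor:covering-genus-lower-bound}, the 2-fold covering link $J$ of $L$ with respect to $L_i$ bounds a smoothly embedded oriented surface $F\subset D^4$ with no closed components and Euler characteristic
\[
\chi(F)=2(m-1)-\a-4\b-4\sum_{k\ne i}g_4(L_k)=2-2=0.
\]

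Next, I would verify the hypotheses of the final clause of Corollary~\ref{cor:covering-genus-lower-bound} in order to promote $F$ to a connected surface. Taking $j$ to be the (unique) component different from $L_i$, the condition is that every $L_k$ with $k\ne j$ is involved in some crossing change with~$L_j$; here this only requires that $L_i$ participates in a crossing change with $L_j$, which is automatic because $\sp(L)=2>0$ and all crossing changes are between $L_1$ and $L_2$. Hence $F$ is connected.

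To count boundary components, I would invoke Lemma~\ref{lem:splittingnumberbasics}: $\sp(L)\equiv\lk(L_1,L_2)\pmod 2$, so the linking number is even. Consequently the preimage of $L_{3-i}$ under the 2-fold cover branched along the unknot $L_i$ is disconnected, so $J$ has exactly two components. Thus $F$ is a connected oriented surface with two boundary components and $\chi(F)=0$. Since a connected oriented surface of genus $g$ with two boundary components satisfies $\chi=2-2g-2=-2g$, we conclude $g=0$, so $F$ is an annulus. Therefore $J$ bounds a smoothly embedded annulus in $D^4$, i.e.\ $J$ is weakly slice.

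The only real subtlety, as far as I can see, is the bookkeeping at the end: one has to combine the parity statement from Lemma~\ref{lem:splittingnumberbasics} with the fact that the 2-fold branched cover of an unknot splits the preimage of a second component exactly when the linking number is even, in order to force $|\bdy F|=2$ and hence $g=0$. Everything else is a direct specialization of the previous corollary.
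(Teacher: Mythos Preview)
Your argument is correct and follows the same route as the paper's proof: note that the linking number is even by Lemma~\ref{lem:splittingnumberbasics} so the covering link has two components, then apply Corollary~\ref{cor:covering-genus-lower-bound} with $m=2$, $\a=2$, $\b=0$, $g_4(L_k)=0$. The paper's proof is terser and simply says ``the conclusion follows,'' whereas you have spelled out the connectedness step via the final clause of Corollary~\ref{cor:covering-genus-lower-bound} and the Euler-characteristic bookkeeping forcing $g=0$; these are exactly the details the paper leaves implicit.
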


\begin{proof}
  First note that a 2-fold covering link has two components, since the
  linking number is even by Lemma~\ref{lem:splittingnumberbasics}.
  Applying Corollary~\ref{cor:covering-genus-lower-bound} with $m=2$,
  $\a=2$, $\b=0$, and $g_4(L_k)=0$, the conclusion follows.
\end{proof}

We state one more corollary to
Theorem~\ref{theorem:general-covering-genus-lower-bound}.  Let
$\sp_i(L)$ be the minimal number of crossing changes between distinct
components not involving $L_i$ required to transforms $L$ to a split
link.  By convention, $\sp_i(L)$ is infinite if we must make a crossing change involving $L_i$ in order to split~$L$.

\begin{corollary}[c.f.\ {\cite[Method~5]{Kohn93}}]
  \label{corollary:covering-link-zero-lk-no}
  For a link $L=L_1\sqcup\cdots\sqcup L_m$ and its 2-fold covering
  link $J$ with respect to $L_i$, we have $\sp_i(L) \ge \frac12
  \sp(J)$.
\end{corollary}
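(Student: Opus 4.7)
The plan is to obtain this corollary as an immediate consequence of Theorem~\ref{theorem:general-covering-genus-lower-bound}, applied in the special case where no crossing change involves the branching component~$L_i$. First, I would observe that the inequality is vacuous when $\sp_i(L)=\infty$, so I may assume $\sp_i(L)=\beta$ is finite. By definition of $\sp_i$, there is a sequence of $\beta$ crossing changes between distinct components of $L$, none of them involving~$L_i$, that transforms $L$ into a split link.

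Next I would invoke Theorem~\ref{theorem:general-covering-genus-lower-bound} with $\alpha=0$ and this~$\beta$. (Note that $L_i$ is unknotted by our standing convention, since $J$ is the 2-fold covering link with respect to~$L_i$ as in Definition~\ref{defn:covering-link-and-height}.) The theorem then produces a sequence of $2\beta$ crossing changes between distinct components of the 2-fold covering link $J$ transforming $J$ into the split union of $2(m-1)$ knots, namely two copies of~$L_j$ for each $j\neq i$. In particular, no internal band sums are needed, because $\alpha=0$.

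Since a split union is by definition a split link, we conclude $\sp(J)\leq 2\beta=2\sp_i(L)$, which is the desired inequality. The only subtlety to double-check is that the $2\beta$ crossing changes produced on $J$ really are between \emph{distinct} components of~$J$: this is built into the statement of Theorem~\ref{theorem:general-covering-genus-lower-bound} (indeed, a Type~B crossing change between $L_j$ and $L_k$ lifts to two crossing changes, each between the preimage of $L_j$ and the preimage of $L_k$, which are disjoint sublinks of $J$ regardless of their connectedness). With that verified, there is nothing further to prove; the main obstacle, namely handling the branched cover lifts of crossing changes, has already been subsumed into Theorem~\ref{theorem:general-covering-genus-lower-bound}.
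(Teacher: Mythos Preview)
Your argument is correct and matches the paper's own proof, which simply reads ``This follows from Theorem~\ref{theorem:general-covering-genus-lower-bound} with $\alpha=0$.'' Your extra remarks (handling the case $\sp_i(L)=\infty$ and verifying that the lifted crossing changes are between distinct components) are helpful elaborations but do not change the approach.
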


\begin{proof}
This follows from Theorem~\ref{theorem:general-covering-genus-lower-bound} with $\a=0$.
\end{proof}

We remark that the above results generalise to $n$-fold covering links in a reasonably straightforward manner.  One can also draw analogous conclusions when the branching component is knotted.  We do not address these generalisations here, since the results stated above are sufficient for the applications considered in this paper.

\subsection{An example of the covering link technique}\label{section:exampleII}

To illustrate the use of the method developed in
Section~\ref{section:covering-link-technique}, we now apply it to
prove that the splitting number of the 2-component link $L9a30$
is three.  More applications of Theorem~\ref{theorem:covering-link-lk-one-ribbon-intro} and Corollaries~\ref{corollary:splitting-number-2-covering-link} and \ref{corollary:covering-link-zero-lk-no} will be discussed later; see
Sections~\ref{section:batson-seed-covering-link-technique},~\ref{section:batson-seed-3-comp-example},~\ref{section:table-splitting-no-results},
and~\ref{section:arguments-splitting-no-particular-links} for
instance.  In this paper, we use the link names employed in the
LinkInfo database~\cite{linkinfo}.  The link $L9a30$ is shown in
Figure~\ref{figure:L9a30}.  It is a two component link of linking
number one with unknotted components.  Recall that the splitting
number is determined modulo 2 by the linking number by
Lemma~\ref{lem:splittingnumberbasics}.  It is easy to see from
Figure~\ref{figure:L9a30} that $3$ crossing changes suffice, so the
splitting number is either one or three.

\begin{figure}[H]
\begin{center}
\includegraphics[scale=.6]{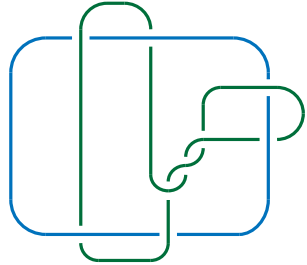}
\end{center}
\caption{The link $L9a30$.}
\label{figure:L9a30}
\end{figure}

To see that $\sp(L9a30) \neq 1$, we take a 2-fold cover branched over
one of the components, and check that the resulting knot is not slice.
Figure~\ref{figure:L9a30-covering-knot} shows the result of an isotopy
which was made in preparation for taking a branched cover on the left,
and the knot obtained as the preimage of $L9a30$ after deleting the
preimage of the branching component on the right.

\begin{figure}[H]
\begin{center}
\includegraphics[scale=0.6]{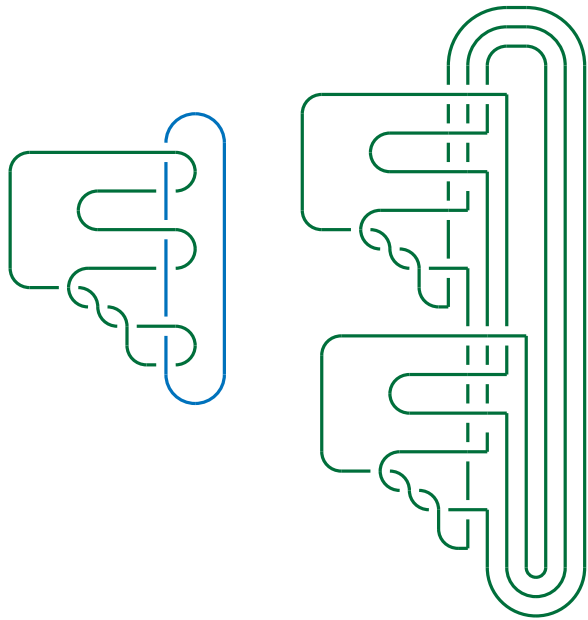}
\end{center}
\caption{Left: the link $L9a30$ after an isotopy to prepare for taking
  the cover branching over the most obviously unknotted component.
  Right: the knot which arises as the covering link after taking
  two-fold branched cover and deleting the branching set.}
\label{figure:L9a30-covering-knot}
\end{figure}

The knot on the right of Figure~\ref{figure:L9a30-covering-knot} after
a simplifying isotopy is shown in
Figure~\ref{figure:L9a30-covering-knot-isotopied}; it is a twist knot
with a negative clasp and 7 positive half twists.  This knot is well
known not to be a slice knot, a fact which was first proved by
A.~Casson and C.~Gordon~\cite{CassonGordon2, CassonGordon}.
Therefore, by Theorem~\ref{theorem:covering-link-lk-one-ribbon-intro},
the splitting number of $L9a30$ is at least three, as claimed.

\begin{figure}[H]
\begin{center}
\includegraphics[scale=.5]{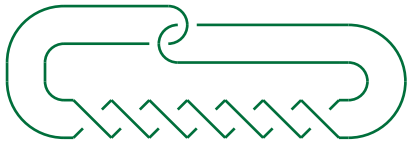}
\end{center}
\caption{The covering knot on the right of Figure~\ref{figure:L9a30-covering-knot} after an isotopy.}
\label{figure:L9a30-covering-knot-isotopied}
\end{figure}

\section{Alexander invariants}

In this section we will recall the definition of Alexander
modules and polynomials of oriented links.  We then show how Alexander modules
are affected by a crossing change which then allows us to prove
Theorem \ref{prop:split1-intro}.

\subsection{Crossing changes and the Alexander module}\label{section:effect-on-alexander-modules}

Throughout this section, given an oriented $m$-component link $L$, the oriented meridians are denoted by $\mu_1,\dots,\mu_m$.
Note that $\mu_1,\dots,\mu_m$ give rise to a basis for $H_1(S^3\setminus \nu L;\Z)$.
We will henceforth use this basis to identify $H_1(S^3\setminus \nu L;\Z)$ with $\Z^m$.
Let $R$ be a subring of $\Bbb{C}$ and let $\psi\colon \Z^m\to F$ be a homomorphism to a free abelian group.
We denote the induced map
\[ \pi_1(S^3\setminus \nu L)\to H_1(S^3\setminus \nu L;\Z)=\Z^m\xrightarrow{\psi} F\]
by $\psi$ as well. We can then consider the corresponding Alexander module
\[ H_1^\psi(S^3\setminus \nu L;R[F])\]
and the order of the Alexander module is denoted by
\[ \Delta_L^\psi\in R[F]=\ord_{R[F]}(H_1^\psi(S^3\setminus \nu L;R[F])).\]
(We refer to \cite{Hi02} for the definition of the order of a $R[F]$-module.)
If $\psi$ is the identity, then we drop $\psi$ from the notation and we obtain the usual multivariable Alexander polynomial $\Delta_L$.

Note that what we term the Alexander module has also been called the ``link module'' in the literature e.g.\ \cite{KawauchiBook96}.
The following proposition relates the Alexander modules of two oriented links which differ by a crossing change.

\begin{proposition}\label{prop:alexmodulecrossingchange}
Let $L$ and $L'$ be two oriented $m$-component links which differ by a single crossing change.
Let $R$ be a subring  of $\Bbb{C}$ and let $\psi\colon \Z^m\to F$ be a homomorphism to a free abelian group.
Then there exists a diagram
\[ \xymatrix @R-0.5cm @C-0.5cm{
&R[F]\ar[dr]&&R[F]\ar[dl]& \\
&& M \ar[dl]\ar[dr] && \\
&H_1^\psi(S^3\setminus \nu L;R[F])\ar[dl]&&H_1(S^3\setminus \nu L';R[F])\ar[dr]& \\
 0&&&&0}\]
where $M$ is some $R[F]$-module and where the diagonal sequences are exact.
\end{proposition}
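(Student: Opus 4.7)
The plan is to realise the single crossing change as a Dehn surgery in $S^3$ and then to apply Mayer--Vietoris in the resulting exterior. Let $C\subset S^3\setminus L$ be an unknotted circle such that $\pm 1$ Dehn surgery on $C$ converts $L$ into $L'$; concretely, $C$ may be taken to bound a disc meeting $L$ transversely in the two strands at the crossing where $L$ and $L'$ differ. Set $X:=S^3\setminus\nu(L\cup C)$ and $T:=\partial\nu C$, and write $\mu_C,\lambda_C$ for the meridian and Seifert longitude of $C$ on $T$. There are canonical decompositions
\[
S^3\setminus\nu L \;=\; X\cup_T V, \qquad S^3\setminus\nu L' \;=\; X\cup_T V',
\]
where $V,V'$ are solid tori glued to $X$ with filling slopes $\mu_C$ and $\mu_C+\varepsilon\lambda_C$ respectively (for some $\varepsilon\in\{\pm 1\}$), differing by a single Dehn twist along $\lambda_C$.

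Next, I would extend $\psi$ to a homomorphism $\tilde\psi\colon H_1(X;\mathbb{Z})=\mathbb{Z}^{m+1}\to F$ compatibly with both Alexander modules, and define
\[
M \;:=\; H_1^{\tilde\psi}(X;R[F]).
\]
For each decomposition, the Mayer--Vietoris long exact sequence reads
\[
H_1(T;R[F]) \to H_1(X;R[F])\oplus H_1(V;R[F]) \to H_1^\psi(S^3\setminus\nu L;R[F]) \to H_0(T;R[F]) \to H_0(X;R[F])\oplus H_0(V;R[F]).
\]
The rightmost arrow is injective, so the middle arrow is surjective. Since $V$ is a solid torus carrying trivial coefficients, $H_1(V;R[F])\cong R[F]$ is generated by the core of $V$, which is homologous on $T$ to $\lambda_C$; the Mayer--Vietoris relation absorbs this summand into the image of $H_1(X;R[F])$, producing a surjection $M\twoheadrightarrow H_1^\psi(S^3\setminus\nu L;R[F])$ whose kernel is the cyclic $R[F]$-submodule generated by $[\mu_C]\in M$. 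The identical argument applied to the second filling yields a surjection $M\twoheadrightarrow H_1^\psi(S^3\setminus\nu L';R[F])$ with cyclic kernel generated by $[\mu_C+\varepsilon\lambda_C]\in M$. Taking the two maps $R[F]\to M$ to be multiplication by these two classes then supplies the asserted diagram.

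The principal technical hurdle is the extension $\tilde\psi$: for it to descend consistently to both $S^3\setminus\nu L$ and $S^3\setminus\nu L'$ one needs $\psi(\lambda_C)=0$ in $F$. This is the most delicate step and is secured by a careful choice of $C$ realising the crossing change with $\lk(C,L_k)=0$ for every component $L_k$ of $L$; granted such a $C$, the Mayer--Vietoris computation sketched above completes the proof.
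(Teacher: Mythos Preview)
Your surgery approach is natural and you correctly isolate the crux: one needs a single extension $\tilde\psi\colon\Z^{m+1}\to F$ compatible with both Dehn fillings. The proposed fix, however, cannot work for a crossing change between distinct components $L_i$ and $L_j$, which is precisely the case needed for splitting numbers. Any unknotted $C$ whose $\pm1$ surgery realises such a crossing change satisfies $|\lk(C,L_i)|=|\lk(C,L_j)|=1$: the surgery alters $\lk(L_i,L_j)$ by $\pm\lk(C,L_i)\lk(C,L_j)$, while a single crossing change alters it by exactly $\pm1$. Consequently the inclusions of $X$ into the two link exteriors induce \emph{different} maps on $H_1$ --- one sends $\mu_C$ to $0$, the other to $-\varepsilon\sum_k\lk(C,L_k)\mu_k\ne 0$ --- so the pulled-back coefficient systems on $X$ genuinely differ, and no single module $H_1^{\tilde\psi}(X;R[F])$ can serve as the $M$ for both diagonals. (Note also that your computation $H_1(V;R[F])\cong R[F]$ already presupposes that the core of $V$, i.e.\ $\lambda_C$, maps trivially to $F$; this is the same obstruction.)

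The paper sidesteps this by excising a different piece. Rather than drilling out $\nu C$, it removes from each link exterior a crossing ball $B\cong(\text{pair of pants})\times[0,1]$; the remaining space $Y$ is literally the \emph{same} subspace of $S^3$ for both $L$ and $L'$, and the paper takes $M=H_1^\psi(Y;R[F])$. Every loop in $Y$ lies outside the crossing region, so its linking numbers with the components of $L$ and of $L'$ agree, and the coefficient system on $Y$ pulled back from either link exterior is the same. A Mayer--Vietoris argument for $Y\cup B$ (with the two gluings) then yields both exact sequences with this common~$M$.
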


The formulation of this proposition is somewhat more general than what is strictly needed in the proof of Theorem \ref{prop:split1-intro}.
We hope that this more general formulation will be applicable, in future work, to the computation of unlinking numbers; see the beginning of Section~\ref{section:table-splitting-no-results} for the definition of the unlinking number of a link.

\begin{proof}
We write $X=S^3\sm \nu L$ and $X'=S^3\sm \nu L'$.
We pick two open disjoint discs $D_1$ and $D_2$ in the interior of $D^2$ and we write
\begin{align*}
B&=(D^2\sm (D_1\cup D_2))\times [0,1],\\
S&=(D^2\sm (D_1\cup D_2))\times \{0,1\}\cup_{\partial D^2 \times\{0,1\}} S^1\times [0,1].
\end{align*}
Put differently, $S$ is the `top and bottom boundary' of $B$ together with the outer cylinder $S^1\times [0,1]$.

Since $L$ and $L'$ are related by a single crossing  change  there exists a subset  $Y$ of $X$ and  continuous injective maps $f\colon B\to X$ and $f'\colon B'\to X'$ with the following properties:

\begin{enumerate}
\item $X=Y\cup f(B)$ and $Y\cap f(B)=f(S)$,
\item $X'=Y\cup f'(B)$ and $Y\cap f'(B)=f'(S)$.
\end{enumerate}

We can now state the following claim.

\begin{claim}
There exists a short exact sequence
\[ R[F]\to  H_1^\psi(Y;R[F])\to H_1^\psi(X;R[F])\to 0.\]
\end{claim}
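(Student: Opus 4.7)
My plan is to apply Mayer--Vietoris to the decomposition $X = Y \cup f(B)$ with $Y \cap f(B) = f(S)$, using the $R[F]$-coefficient system induced by $\psi$. The relevant portion of the long exact sequence is
\[
H_1^\psi(f(S)) \to H_1^\psi(Y) \oplus H_1^\psi(f(B)) \to H_1^\psi(X) \xrightarrow{\partial} H_0^\psi(f(S)) \to H_0^\psi(Y) \oplus H_0^\psi(f(B)),
\]
with $R[F]$-coefficients throughout. I will reduce the claim to two facts about the inclusions $S \hookrightarrow B$ and $S \hookrightarrow Y$.

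The first fact is that $H_1^\psi(f(S)) \to H_1^\psi(f(B))$ is surjective. To see this, note that the ``top'' pair of pants $(D^2\setminus(D_1\cup D_2)) \times \{0\}$ is a deformation retract of $B$ and is contained in $S$, so the inclusion-induced composition $H_*^\psi\bigl((D^2\setminus(D_1\cup D_2))\times\{0\}\bigr) \to H_*^\psi(f(S)) \to H_*^\psi(f(B))$ is an isomorphism; hence the second map is surjective. The second fact is that $\pi_1(B)$ and $\pi_1(S)$ have the same image in $F$ under $\psi$, namely the subgroup generated by the $\psi$-images of the meridians of the two strands passing through the crossing ball. This forces $H_0^\psi(f(S)) \to H_0^\psi(f(B))$ to be an isomorphism, so the Mayer--Vietoris map into $H_0^\psi(Y) \oplus H_0^\psi(f(B))$ is injective and $\partial = 0$. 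The sequence then truncates to the right-exact
\[
H_1^\psi(f(S)) \to H_1^\psi(Y) \oplus H_1^\psi(f(B)) \to H_1^\psi(X) \to 0.
\]

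A short diagram chase using the surjectivity of $H_1^\psi(f(S)) \to H_1^\psi(f(B))$ then shows that $H_1^\psi(Y) \to H_1^\psi(X)$ is itself surjective, with kernel equal to the image of $K := \ker\bigl(H_1^\psi(f(S)) \to H_1^\psi(f(B))\bigr)$ under the inclusion-induced map $H_1^\psi(f(S)) \to H_1^\psi(Y)$.

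The last step is to identify $K$ as a cyclic $R[F]$-module. I would use the explicit models $B \simeq S^1 \vee S^1$, with 1-cells $a_1, a_2$ the loops around the two inner cylinders, and $S$ (a 4-holed sphere) as a wedge of three of its four oriented boundary circles. The four boundary circles of $S$ fall into two pairs, one pair around each strand; because the two ends of an inner cylinder inherit opposite orientations from $\partial B$, each pair maps to $\pi_1(B)$ as $a_i$ and $a_i^{-1}$ (up to conjugacy). A direct computation on the twisted cellular chain complexes then identifies $K$ as the cyclic $R[F]$-module generated by the class of a specific $R[F]$-combination of the two boundary circles that wrap the same strand. Composing the resulting surjection $R[F] \twoheadrightarrow K$ with $K \to H_1^\psi(Y)$ produces the desired first map in the three-term exact sequence. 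The main obstacle I expect is the orientation bookkeeping in this last step: one has to track carefully how the induced orientations of the four boundary circles of $S$ and their $\psi$-images in $F$ interact, in order to pin down the generator of $K$ and verify that $K$ really is cyclic over $R[F]$.
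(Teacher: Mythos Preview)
Your proposal is correct and follows the same Mayer--Vietoris strategy as the paper. For the final step the paper replaces $B$ and $S$ by explicit wedges of circles ($B \simeq m \vee n$ and $S \simeq m \vee n \vee m'$, with $m'$ homotopic to $m$ in~$B$) and applies the snake lemma to the resulting short exact sequence of cellular chain complexes, which makes $K = R[F]\cdot(m-m')$ transparent and bypasses the orientation bookkeeping you anticipate.
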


By a slight abuse of notation we now write $B=f(B)$ and $S=f(S)$.
We then consider the following  Mayer-Vietoris sequence:
\begin{multline*}
\cdots\to H_1^\psi(S;R[F])\xrightarrow{i_*\oplus j_*} H_1^\psi(B;R[F])\oplus
H_1^\psi(Y;R[F]) \to H_1^\psi(X;R[F])
\\
\to H_0^\psi(S;R[F])\xrightarrow{i_*\oplus j_*} H_0^\psi(B;R[F])\oplus
H_0^\psi(Y;R[F])
\end{multline*}
where $i\colon S\to B$ and $j\colon S\to Y$ are the two inclusion maps.
We need to study the relationships between the homology groups of $S$ and $B$. We make the following observations:
By \cite[Section~VI.3]{HS97} we have the following commutative diagram
\[ \xymatrix{ H_0^\psi(S;R[F])\ar[d]_{\cong} \ar[r] & H_0^\psi(B;R[F])\ar[d]_{\cong} \\
R[F]/\{ \psi(g)v-v\}_{v\in R[F],\, g\in \pi_1(S)}\ar[r] & R[F]/\{ \psi(g)v-v\}_{v\in R[F],\, g\in \pi_1(B)}.}\]
Here the horizontal maps are induced by the inclusion $S\to B$ and the vertical maps are isomorphisms.
The map $i_*\co \pi_1(S)\to \pi_1(B)$ is surjective; it follows that the bottom horizontal map is an isomorphism.
Hence the top horizontal map is also an isomorphism.
The above Mayer-Vietoris sequence thus simplifies to the following sequence
\[ H_1^\psi(S;R[F])\xrightarrow{i_*\oplus j_*} H_1^\psi(B;R[F])\oplus H_1^\psi(Y;R[F])\to H_1^\psi(X;R[F])\to 0.\]

We note that the space $B$ is homotopy equivalent to a wedge of two circles $m$ and $n$.
Furthermore,  $S$ is homotopy equivalent to the wedge of $m,n$ and another curve $m'$ which is homotopic to $m$ in $B$.
By another slight abuse of notation we now replace $B$ and $S$ by these wedges of circles and we view $B$ and $S$ as CW-complexes with precisely one 0-cell
in the obvious way. We denote by $p\co \tilde{S}\to S$ and $p\co \tilde{B}\to B$
the coverings corresponding to the homomorphisms $\pi_1(S)\to \pi_1(B)\to \pi_1(X)\xrightarrow{\psi}F$. Note that we can and will view $\tilde{B}$ as a subset of $\tilde{S}$.
We now pick pre-images $\tilde{m}, \tilde{n}$ and $\tilde{m}'$ of $m,n$ and $m'$ under the covering map  $p\colon \tilde{S}\to S$.
Note that $\{\tilde{m},\tilde{n}\}$ is a basis for $C_1(B;R[F])=C_1(\tilde{B})$ and
$\{\tilde{m},\tilde{n},\tilde{m}'\}$ is a basis for $C_1(S;R[F])=C_1(\tilde{S})$.
 The kernel of the map $C_1(S;R[F])\to C_1(B;R[F])$ is given by
$R[F]\cdot (m-m')$. We thus obtain the following commutative diagram of chain complexes with exact rows
\[ \xymatrix{ 0\ar[r]& R[F]\cdot (m-m')\ar[r]\ar[d] &C_1(S;R[F])\ar[d]\ar[r]^{i_*}& C_1(B;R[F])\ar[r]\ar[d]&0 \\
&0\ar[r]& C_0(S;R[F])\ar[r]^{i_*}&C_0(B;R[F])\ar[r]&0.}\]
It now follows easily from the diagram, or more formally from the snake lemma, that
\be \label{equ:ker} \ker(i_*\co H_1^\psi(S;R[F])\to H_1^\psi(B;R[F]))\cong R[F]\cdot(m-m')\ee
and that
\be \label{equ:coker} \operatorname{coker}(i_*\co H_1^\psi(S;R[F])\to H_1^\psi(B;R[F]))=0.\ee

Finally we consider the following commutative diagram
\[ \xymatrix @R+0.5cm { R[F]\cdot (m-m')\ar[r] \ar[d]& H_1^\psi(Y;R[F])\ar[r]\ar[d]_{\begin{pmatrix}0 \\ \Id \end{pmatrix}} & H_1^\psi(X;R[F])\ar[d]^=\ar[r]& 0 \\
 H_1^\psi(S;R[F])\ar[r] & H_1^\psi(B;R[F])\oplus H_1^\psi(Y;R[F]) \ar[r] & H_1^\psi(X;R[F])\ar[r]&0.}\]

We had already seen above that the bottom horizontal sequence is exact. It now follows from (\ref{equ:ker}), (\ref{equ:coker})
and some modest diagram chasing that the top horizontal sequence is also exact.
This concludes the proof of the claim.

Precisely the same proof shows that there exists a short exact sequence
\[ R[F]\to  H_1^\psi(Y;R[F])\to H_1^\psi(X';R[F])\to 0.\]
(Use $B=f'(B), S=f'(S)$ instead of $B=f(B), S=f(S)$). Combining these two short exact sequences now gives the desired result, by taking $M:=H_1^\psi(Y;R[F])$.
\end{proof}


\subsection{The Alexander polynomial obstruction}\label{section:alex-poly-obstructions-to-splitting}

Using Proposition \ref{prop:alexmodulecrossingchange} we can prove the following obstruction to the splitting number being equal to $1$.

\begin{theorem}\label{prop:split1}
Let $L$ be a 2-component oriented link.  We denote the Alexander polynomial of $L$ by $\Delta_L(s,t)$.
If the splitting number of $L$ equals one, then $\Delta_L(s,1)\cdot \Delta_L(1,t)$ divides $\Delta_L(s,t)$.
\end{theorem}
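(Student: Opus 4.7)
The plan is to apply Proposition~\ref{prop:alexmodulecrossingchange} with $\psi=\Id$ and $R=\Q$, working over $\L := \Q[s^{\pm 1}, t^{\pm 1}]$, to compare the Alexander modules of $L$ and of a split link $L'$ obtained from $L$ by a single crossing change.  Since Alexander polynomials are primitive in $\Z[s^{\pm 1}, t^{\pm 1}]$, by Gauss's lemma it suffices to prove the divisibility in $\L$.  A crossing change between different components preserves the component knot types, so $L' = L_1 \sqcup L_2$; by Lemma~\ref{lem:splittingnumberbasics} the linking number of $L$ equals $\pm 1$, and Torres's formula then gives $\Delta_L(s,1) \doteq \Delta_{L_1}(s)$ and $\Delta_L(1,t) \doteq \Delta_{L_2}(t)$, reducing the goal to showing $\Delta_{L_1}(s)\,\Delta_{L_2}(t) \mid \Delta_L(s,t)$ in $\L$.

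First I would determine the structure of $H_{L'} := H_1(X_{L'}; \L)$.  Since $\pi_1(X_{L'}) \cong \pi_1(X_{L_1}) * \pi_1(X_{L_2})$, Fox calculus produces a block-diagonal Jacobian; deleting a single column yields
\[
H_{L'} \;\cong\; \bigl(A_{L_1} \otimes_{\Q[s^{\pm 1}]} \L\bigr) \,\oplus\, \bigl(A_{L_2} \otimes_{\Q[t^{\pm 1}]} \L\bigr) \,\oplus\, \L,
\]
where $A_{L_i}$ is the rational Alexander module of $L_i$.  In particular $H_{L'}$ has $\L$-rank one, and its torsion submodule has order $\Delta_{L_1}(s)\,\Delta_{L_2}(t)$.

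Applying the Proposition produces an $\L$-module $M$ with cyclic submodules $A,A' \subseteq M$ (the two images of $\L$) satisfying $M/A \cong H_L := H_1(X_L; \L)$ and $M/A' \cong H_{L'}$.  Assuming $\Delta_L \neq 0$ (otherwise the conclusion is vacuous), $H_L$ is $\L$-torsion.  A rank count across the two exact sequences then forces $A'$ to be torsion, $A \cong \L$ (free of rank one), and $M$ of rank one.  Consequently $M_{\mathrm{tors}}$ fits in
\[
0 \to A' \to M_{\mathrm{tors}} \to H_{L'}^{\mathrm{tors}} \to 0,
\]
so $\ord(M_{\mathrm{tors}}) \doteq \ord(A') \cdot \Delta_{L_1}(s)\,\Delta_{L_2}(t)$.

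The main obstacle is extracting a divisibility for $\ord(H_L) = \Delta_L$ from this, since $A \cong \L$ need not be a direct summand of $M$.  I would overcome this by localizing at each height-one prime.  For an irreducible $p \in \L$, the localization $\L_{(p)}$ is a DVR, over which $M_{(p)}$ splits as $(M_{\mathrm{tors}})_{(p)} \oplus \L_{(p)}$.  Writing a generator of $A_{(p)}$ in this splitting as $(t_0, p^c u)$ with $u$ a unit and $c \geq 0$, a short filtration of $(M/A)_{(p)}$ by its free summand computes its length, giving
\[
v_p(\Delta_L) \;=\; v_p\bigl(\ord(A')\bigr) + v_p\bigl(\Delta_{L_1}(s)\,\Delta_{L_2}(t)\bigr) + c \;\geq\; v_p\bigl(\Delta_{L_1}(s)\,\Delta_{L_2}(t)\bigr).
\]
Since this inequality holds at every irreducible $p \in \L$, we conclude $\Delta_{L_1}(s)\,\Delta_{L_2}(t) \mid \Delta_L(s,t)$ in $\L$, and hence in $\Z[s^{\pm 1}, t^{\pm 1}]$.
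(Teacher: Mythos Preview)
Your argument is correct and arrives at the same conclusion, but the route differs from the paper's in a few interesting ways.

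Both proofs start from Proposition~\ref{prop:alexmodulecrossingchange} and the Torres reduction to $\Delta_{L_1}(s)\Delta_{L_2}(t)\mid \Delta_L(s,t)$, and both use that $H_{L'}$ for the split link has a rank-one free part together with a torsion part of order $\Delta_{L_1}(s)\Delta_{L_2}(t)$.  (Your Fox-calculus sentence is a little glib; the clean way to get the direct-sum decomposition is exactly the Mayer--Vietoris argument the paper uses, which exhibits the free summand as the image of the connecting map $h$ to~$\L$.)  From here the proofs diverge.  The paper works over $\Z[s^{\pm1},t^{\pm1}]$ and uses the explicit map $h\colon H_{L'}\to \L$ from Mayer--Vietoris: since $H_L$ is torsion, the cyclic image $(g\circ f)(\L)$ in $H_{L'}$ must hit the free part nontrivially via $h$, so it meets $\ker h = A_{L_1}\oplus A_{L_2}$ trivially; then order-multiplicativity (Hillman) gives the divisibility directly.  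You instead pass to $\Q[s^{\pm1},t^{\pm1}]$, run a rank count on $M$ to force $A\cong\L$ and $A'$ torsion, identify $M_{\mathrm{tors}}$ via the $A'$-sequence, and then localise at height-one primes, using that $\L_{(p)}$ is a DVR to split $M_{(p)}$ and compare lengths.

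What each buys: the paper's argument is shorter and stays over $\Z$, but leans on the specific geometric map $h$ and on the order-multiplicativity lemma for $\Z[s^{\pm1},t^{\pm1}]$-modules.  Your localisation argument is more purely module-theoretic and would transplant to other two-variable settings without needing to name the map $h$; the price is the detour through $\Q$ and Gauss's lemma, plus the DVR computation.  The two arguments are really two ways of saying the same thing: the free rank-one piece of $H_{L'}$ absorbs the cyclic perturbation from Proposition~\ref{prop:alexmodulecrossingchange}, so the torsion of $H_{L'}$ survives into (a quotient dividing) $H_L$.
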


Let $L=J\cup K$ be an oriented link with splitting number equal to one. We denote the Alexander polynomials of $J$ and $K$ by $\Delta_J$ and $\Delta_K$ respectively.  It follows from Lemma~\ref{lem:splittingnumberbasics} that the linking number satisfies $|\lk(J,K)|=1$.
Therefore by the Torres condition $|\Delta_L(1,1)|=1$ and we have that
\[\Delta_L(s,1)=\Delta_J(s)\mbox{ and } \Delta_L(1,t)=\Delta_K(t).\]
We can thus reformulate the statement of the theorem as saying that if $L=J\cup K$ is an oriented link with splitting number equal to one,
then $\Delta_J(s)$ and $\Delta_K(t)$ both divide $\Delta_L(s,t)$.

\begin{proof}
Let $L=J\cup K$ be an oriented link with splitting number equal to one.
We denote by $\psi \colon H_1(S^3\sm L;\Z)\to \ll s,t\,|\,[s,t]=1\rr$ the map which is given by sending the meridian of $J$ to $s$ and by sending the meridian of $K$ to $t$.
We write $\L := \Z[s^{\pm 1},t^{\pm 1}]$.

In the following we also denote by $\psi$ the map $H_1(S^3\sm J;\Z)\to \ll s,t\,|\,[s,t]=1\rr$, which is given by sending the meridian of $J$ to $s$.
Note that with this convention we have an isomorphism
\[ H_1^\psi(S^3\sm J;\L)=H_1(S^3\sm J;\Z[s^{\pm 1}])\otimes_{\Z[s^{\pm 1}]}\L\]
and we obtain that
\be \label{equ:deltaj} \ord_{\L}(H_1^\psi(S^3\sm J;\L))=\Delta_J(s).\ee
Similarly we define a map $H_1(S^3\sm K;\Z)\to \ll s,t\,|\,[s,t]=1\rr$ by sending the meridian of $K$ to $t$. We see that
\be \label{equ:deltak} \ord_{\L}(H_1^\psi(S^3\sm K;\L))=\Delta_K(t).\ee

We denote the split link with components $J$ and $K$ by $J \sqcup K$.
The Mayer-Vietoris sequence for $S^3\sm (J  \sqcup  K)$ which comes from  splitting along the separating 2-sphere $S$ gives rise to an exact sequence
\begin{multline*}
0\to  H_1^\psi(S^3\sm J;\L)\oplus H_1^\psi(S^3\sm  K;\L)\to
H_1^\psi(S^3\sm ( J\sqcup K);\L)
\\
\xrightarrow{h} H_0(S;\L)\to H_0^\psi(S^3\sm J;\L)\oplus H_0^\psi(S^3\sm  K;\L).
\end{multline*}
We recall that by \cite[Section~VI]{HS97} we have, for any connected space $X$ with a ring homomorphism $\psi\co \pi_1(X)\to \L$ we have
\[ H_0^\psi(X;\L)=\L/\{\psi(g)v-v\,|\, v\in \L,g\in \pi_1(X)\}.\]
It follows easily that  $ H_0^\psi(S;\L)\cong \L$ and that  $H_0^\psi(S^3\sm J;\L)$ and $H_0^\psi(S^3\sm  K;\L)$ are $\L$-torsion.
In particular we see that the last map in the above long exact sequence  has a nontrivial kernel.
By the exactness of the Mayer-Vietoris sequence above it follows that the map $h$ has nontrivial image.

Since $L$ has splitting number one we can do one crossing change involving both $J$ and $K$ to turn $L$ into $J\sqcup K$.
The conclusion of Proposition \ref{prop:alexmodulecrossingchange} together with the above Mayer-Vietoris sequence gives rise to a  diagram of maps as follows:
\[\xymatrix{\L\ar[r]^-{f} &M\ar@{->>}[r]^-{p}\ar@{->>}[d]^-{g} & H_1^\psi(S^3\sm L;\L) \\
 H_1^\psi(S^3\sm  J;\L)\oplus H_1^\psi(S^3\sm K;\L)\ar@{^{(}->}[r]& H_1^\psi(S^3\sm ( J\sqcup  K);\L)\ar[r]^-h& \L,}\]
where the top and bottom horizontal sequences are exact, and where the map $h$ is nontrivial.
In particular note that $p$ gives rise to an isomorphism $M/f(\L)\cong H_1^\psi(S^3\sm L;\L)$,
and that $g$ gives rise to an epimorphism
\be \label{equ1}  H_1^\psi(S^3\sm  L;\L)\cong M/f(\L)\to H_1^\psi(S^3\sm ( J\sqcup K);\L)\,/\,(g\circ f)(\L).\ee
Next we will prove the following claim.

\begin{claim}
The map
\[ H_1^\psi(S^3\sm J;\L)\oplus H_1^\psi(S^3\sm K;\L)\to  H_1^\psi(S^3\sm J\sqcup K;\L)\,/\,(g\circ f)(\L)\]
is a monomorphism.
\end{claim}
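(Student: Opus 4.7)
The plan is to chase the diagram and reduce the claim to showing that the composition $h\circ g\circ f \colon \L \to \L$ is a nonzero $\L$-module homomorphism. Suppose $(\alpha,\beta) \in A := H_1^\psi(S^3\sm J;\L)\oplus H_1^\psi(S^3\sm K;\L)$ maps to zero in the quotient, so $\iota(\alpha,\beta) = (g\circ f)(y)$ for some $y \in \L$, where $\iota$ denotes the Mayer--Vietoris injection of $A$ into $H_1^\psi(S^3\sm (J\sqcup K);\L)$. Applying $h$ and using $h\circ\iota = 0$ (exactness of the bottom row of the diagram), we obtain $(h\circ g\circ f)(y) = 0$. Once we know that $h\circ g\circ f$ is nonzero, we are done: since $\L$ is an integral domain, any nonzero $\L$-linear endomorphism of $\L$ is multiplication by a nonzero element and hence injective, so $y = 0$, hence $\iota(\alpha,\beta) = 0$, and finally $(\alpha,\beta) = 0$ by injectivity of $\iota$.

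The task is therefore to evaluate $(h\circ g\circ f)(1) \in \L$ and show it is nonzero. From the proof of Proposition~\ref{prop:alexmodulecrossingchange}, the element $f(1)\in M = H_1^\psi(Y;\L)$ is represented by the cycle $m - m'$ on the boundary $S$ of the crossing ball $\Omega$, where $m$ is a meridian of $J$ and $m'$ is a curve on $S$ that becomes homotopic to $m$ inside the ball via a path that encounters the $K$-strand. Under $g$ this becomes the class of $[m-m']$ in $H_1^\psi(S^3\sm (J\sqcup K);\L)$, and $h$ evaluates the twisted algebraic intersection of this cycle with a splitting 2-sphere $S_0$ for $J\sqcup K$. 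Because the crossing change occurs between the two distinct components, a local-model computation identifies the class of $[m-m']$ in $X' = S^3\sm (J\sqcup K)$ with that of a commutator $[\mu_J,\mu_K]$ of meridians; a direct chain-level Mayer--Vietoris calculation then gives $h([\mu_J,\mu_K]) = \pm(s-1)(t-1)$, a nonzero element of $\L$.

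The main obstacle in making this rigorous lies in the identification of $[m-m']$ in $X'$ with the commutator class $[\mu_J,\mu_K]$, and in the explicit evaluation of $h$ on this commutator. The first part requires a careful inspection of the curves $m$ and $m'$ in the Mayer--Vietoris model of the crossing ball and how they sit inside the split-link complement. The second part uses the Mayer--Vietoris decomposition $X' = (S^3\sm J)\cup_{S_0} (S^3\sm K)$ together with the computation that the image of $h$ is exactly the ideal $(s-1)(t-1)\L \subset \L$ (coming from the kernel of the augmentation-type map $\L \to \L/(s-1)\oplus \L/(t-1)$); the nontriviality of the commutator class is ultimately guaranteed by $|\lk(J,K)| = 1$, which is forced by $\sp(L)=1$ via Lemma~\ref{lem:splittingnumberbasics}.
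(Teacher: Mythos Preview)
Your first paragraph's reduction to showing $h\circ g\circ f\neq 0$ is exactly the paper's reduction. Where you diverge is in \emph{how} you establish this nonvanishing. The paper does not attempt to compute $(h\circ g\circ f)(1)$ explicitly; instead it argues indirectly: the quotient $H_1^\psi(S^3\sm(J\sqcup K);\L)/(g\circ f)(\L)$ is $\L$-torsion, because by~(\ref{equ1}) it is an epimorphic image of $H_1^\psi(S^3\sm L;\L)$, which is torsion since $\Delta_L\neq 0$. A short diagram chase, using that $h$ is nontrivial, then forces the projection $\L\to\L/(h\circ g\circ f)(\L)$ to have nontrivial kernel, i.e.\ $h\circ g\circ f\neq 0$.

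Your direct route---identifying $(g\circ f)(1)$ with the commutator class via the Wirtinger relation at the changed crossing and evaluating $h$ on it as $\pm(s-1)(t-1)$---is correct and can be made rigorous along the lines you indicate (the local model really does give $\mu_1\mu_2^{-1}=[\mu_J,\mu_K^{\pm1}]$ in $X'$, and the Mayer--Vietoris boundary on the commutator loop is $\pm(s-1)(t-1)$). It is more explicit and in fact slightly more robust: it does not need $\Delta_L\neq 0$. The paper's argument is cleaner in that it avoids any local computation, at the price of invoking the torsion condition.

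One correction: your final appeal to $|\lk(J,K)|=1$ is misplaced. That hypothesis, via the Torres condition giving $\Delta_L\neq 0$, is precisely what powers the paper's torsion argument; your explicit computation of $h([\mu_J,\mu_K])$ in the split-link complement does not use it at all, since the commutator class is nontrivial for any split two-component link.
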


We consider the following commutative diagram
\[ \xymatrix @C-0.5cm{
&&\L\ar[d]^-{g\circ f} &  \\
0\ar[r]&H_1^\psi(S^3\sm J;\L)\oplus H_1^\psi(S^3\sm K;\L)\ar@{^{(}->}[r]& H_1^\psi(S^3\sm (J\sqcup K);\L)\ar[r]^-{h}\ar[d]& \L\ar[d]\\
&& \frac{H_1^\psi(S^3\sm (J\sqcup K);\L)}{(g\circ f)(\L)} \ar[r]^-{h} & \L/(h\circ g\circ f)(\L),}\]
where the bottom vertical maps are the obvious projection maps. Furthermore, as above the map $h$ in the middle sequence is nontrivial.

We first note that the bottom left group is $\L$-torsion.
Indeed, in the discussion preceding the proof we saw that $\Delta_L(s,t)\ne 0$.
This implies that the homology group $H_1^\psi(S^3\sm  L;\L)$ is $\L$-torsion.
But by (\ref{equ1}) this also implies that the bottom left group of the diagram is $\L$-torsion.

It follows that in the square the composition of maps given by going down and then right factors through a $\L$-torsion group.
On the other hand we have seen that the map $h\colon H_1^\psi(S^3\sm (J\sqcup K);\L)\to \L$ is nontrivial.
By the commutativity of the square and by the fact that the down-right composition of maps factors through a $\L$-torsion group it now follows that the projection map $\L\to \L/(h\circ g\circ f)(\L)$ cannot be an isomorphism.  But this just means that the composition
\[ \L\xrightarrow{g\circ f} H_1^\psi(S^3\sm (J\sqcup K);\L)\xrightarrow{h}\L\]
is nontrivial, and in particular injective. Put differently, we have
\[ \im\Big(g\circ f\colon \L\to H_1^\psi(S^3\sm (J\sqcup K);\L)\Big)\,\cap\, \ker(h)=0.\]
By the exactness of the middle horizontal sequence we thus see that the intersection
of the images of $(g\circ f)(\L)$ and of $H_1^\psi(S^3\sm J;\L)\oplus H_1^\psi(S^3\sm K;\L)$ in $ H_1^\psi(S^3\sm (J\sqcup K);\L))$ is trivial.
It follows that the map
\[ H_1^\psi(S^3\sm J;\L)\oplus H_1^\psi(S^3\sm K;\L)\to  H_1^\psi(S^3\sm J\sqcup K;\L)/(g\circ f)(\L)\]
is indeed a monomorphism.
This concludes the proof of the claim.

Before we continue with the proof we recall that if
\[ 0\to A\to B\to C\to 0 \]
is a short exact sequence of $\L$-modules, then by \cite[Part~1.3.3]{Hi02} the orders of the modules are related by the following  equality
\be \label{equ:ordmult} \ord_{\L}(B)=\ord_{\L}(A)\cdot \ord_{\L}(C).\ee

We thus see, from the claim and from  (\ref{equ:ordmult}), (\ref{equ:deltaj}) and (\ref{equ:deltak}) that
\[ \Delta_J(s)\cdot \Delta_K(t)\, \,|\, \,\ord_{\L}\left(H_1^\psi(S^3\sm J\sqcup K;\L)/(g\circ f)(\L)\right).\]
On the other hand it follows from (\ref{equ1}) and again from (\ref{equ:ordmult}) that
\[  \ord_{\L}\left(H_1^\psi(S^3\sm (J\sqcup K);\L)/(g\circ
  f)(\L)\right)\,\,\Big{|}\,\, \ord_{\L}\left(H_1^\psi(S^3\sm
  L;\L)\right).
\qedhere
\]
\end{proof}

\subsection{An example of the Alexander polynomial technique}
\label{section:splitting-no-examples}

We consider the oriented link $L=K\,\sqcup\, J=L9a29$ from Figure~\ref{figure:9a29}.
\begin{figure}[h]
  \labellist
  \small\hair 0mm
  \pinlabel {$J$} at 144 100
  \pinlabel {$K$} at 20 88
  \endlabellist
\begin{center}
\includegraphics{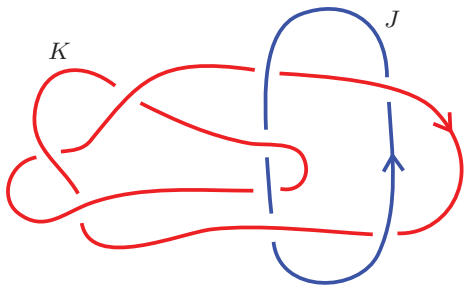}
\end{center}
\caption{The link $L9a29$, with splitting number~$3$.}\label{figure:9a29}
\end{figure}
It has linking number one and it is not hard to see that one can turn it into a split link using three crossing changes between the two components.
The multivariable Alexander polynomial of $L$ is
\[ \ba{rcl} \Delta_L(s,t)&=&s-s^2+t-st+s^2 t-t^2+s t^2-s^2 t^2+t^3-s t^3+s^2 t^3-t^4+s t^4.\ea \]
It is straightforward to see that $\Delta_J(s)\cdot \Delta_K(t)=1-t+t^2$ does not divide $\Delta_L(s,t)$.
It thus follows from Theorem~\ref{prop:split1} that the splitting number of $L$ is three.

This is one of the instances of the use of the Alexander polynomial
which is cited in Section~\ref{section:table-splitting-no-results}, in
Table~\ref{table:splitting-numbers}
(method~\ref{aaitem:alexander-polynomial}).  The other computations
listed in that table as using this method are performed in a similar
fashion; see the LinkInfo tables~\cite{linkinfo} for the
multivariable Alexander polynomials of the other 9 crossing links,
which are $L9a24$, $L9n13$, $L9n14$ and $L9n17$.  Since these are two
component links of linking number 1, the Alexander polynomials of the
components can be obtained by substituting either $t=1$ or $s=1$ into
the multivariable Alexander polynomial in $\Z[s^{\pm 1},t^{\pm 1}]$.

\section{The examples of Batson and Seed}\label{section:batson-seed-examples}

In~\cite{batson-seed}, Batson and Seed constructed a spectral sequence from
the Khovanov homology of a link $L$ to the Khovanov homology of the
split link with the same components as $L$.  This spectral sequence
gives rise to a lower bound on the splitting number, given by the
lowest page on which their spectral sequence collapses.

Batson and Seed computed the lower bound for all links up to 12
crossings and they showed that it provides more information than basic
linking number observations (see our
Lemma~\ref{lem:splittingnumberbasics}) for 17 links.  The lower bound they computed will be denoted by
$b(L)$.  One of the 17 links is a
3-component link with 12 crossings, for which $b(L)=3$ while the
sum of the absolute values of the linking numbers is one.  The remaining 16 links have 2-components
and satisfy $\lk(L)=\pm 1$ and $b(L)=3$.  One of these has 11
crossings, and 15 of these have 12 crossings.  Batson and Seed determined the
splitting numbers for 7 links among these 17 links, while for the other 10
cases the splitting numbers are listed as being either 3 or 5.  This information is given
in~\cite[Table~3]{batson-seed}.

In this section we revisit these links to reprove or improve the
results in~\cite{batson-seed}.  In particular we completely determine
the splitting numbers by using our methods.

\subsection{Using the Alexander polynomial}\label{section:alex-poly-batson-seed}

We first apply our Alexander polynomial method to the examples of~\cite{batson-seed} with at least one knotted component.  This will reprove their splitting number results for these links.    Before we turn to the links of
\cite[Table~3]{batson-seed}, we will discuss a link with
13-crossings in detail, which is also discussed in~\cite{batson-seed}.

\subsubsection*{A 13-crossing example}

Consider the 2-component link $L$ shown
in Figure~\ref{figure:rubermans-link}.  It is the link denoted by
${}^2n_{8862}^{13}$ in~\cite{batson-seed}.
\begin{figure}[H]
\begin{center}
\includegraphics[scale=.65]{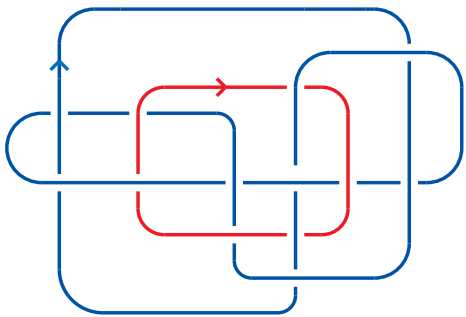}
\end{center}
\caption{A 2-component link of linking number 1 whose splitting number equals 3.}
\label{figure:rubermans-link}
\end{figure}
Note that one component is an unknot and the other is a trefoil.  We
refer to the unknotted component as $J$ and to the knotted component
as~$K$.  It is not hard to see that $L$ can be turned into a split
link using three crossing changes.  On the other hand the linking
number is $1$, so it follows from Lemma
\ref{lem:splittingnumberbasics} that the splitting number is either
one or three.  The invariant $b(L)$ shows that the splitting
number of $L$ is in fact three.

We will now use Theorem~\ref{prop:split1} to give another proof that
the splitting number of $L$ equals three.  We used Kodama's program
\emph{knotGTK} to show that
\[
\begin{split}
\Delta_L(s,t)&=
-s^8t^4+s^7t^5+4s^8t^3-5s^6t^5-6s^8t^2-9s^7t^3+13s^6t^4+11s^5t^5+4s^8t\\
&\quad +17s^7t^2-6s^6t^3-37s^5t^4-14s^4t^5-s^8-12s^7t-10s^6t^2+45s^5t^3-4s^6 \\
&\quad +52s^4t^4+11s^3 t^5+3s^7+12s^6t-24s^5t^2-74s^4t^3-44s^3t^4-5s^2t^5 \\
&\quad +2s^5t +51s^4t^2+67s^3t^3+23s^2t^4+st^5+3s^5-13s^4t-46s^3t^2-39s^2t^3\\
&\quad -7st^4-s^4+11s^3t+25s^2t^2+15s t^3+t^4-3s^2t-9st^2-3t^3+2t^2.
\end{split}
\]
It is straightforward to see
(we used Maple) that
\[
\Delta_L(s,1)\cdot\Delta_L(1,t) = \Delta_J(s)\cdot \Delta_K(t)=1-t+t^2
\]
does not divide $\Delta_L(s,t)$.
Thus it follows from Theorem~\ref{prop:split1} that the splitting number of $L$ is not one.
By the above observations we therefore see that the splitting number of $L$ is equal to three.

\subsubsection*{Seven 12-crossing examples}

In~\cite[Table~3]{batson-seed}, Batson and Seed give
seven examples of 2-component 12 crossing links which have linking
number equal to one and for which $b(L)$ detects that the
splitting number is three.

In Table~\ref{table:seven-12-crossing-links-and-alexander-poly}, we
list the links together with their Dowker-Thistlethwaite (DT) codes
and multivariable Alexander polynomials.  The translation between the names we use
(following LinkInfo~\cite{linkinfo}) and the convention used in
\cite{batson-seed} is given by $L12nX = {}^2a^{12}_{X+4196}$.  All
these Alexander polynomials are irreducible.  For each link, both components are trefoils, so $\Delta_L(s,1) = 1-s+s^2$ and $\Delta_L(1,t) = 1-t+t^2$ do not divide the multivariable Alexander polynomial.  Thus it follows from Theorem~\ref{prop:split1} that
the splitting number of each of these links is at least three, which
recovers the results of Batson and Seed.  Inspection of the diagrams
shows that the splitting numbers are indeed equal to 3.

\begin{table}
\begin{center}
\small
\begin{tabular}{lm{33mm}m{75mm}<{\raggedright}}
\toprule
Link & DT code & Alexander polynomial\\
\midrule
$L12n1342$ & $(14, \ol{6}, \ol{10}, 16, \ol{4}, \ol{18})$,\linebreak\hbox{}\hfill$(\ol{20}, 22, 8, \ol{24}, \ol{2}, 12)$ &
$s^2t^4-st^4-s^2t^2+s^2t+st^2+t^3-t^2-s+1$ \\
\noalign{\vspace{.7ex}}
$L12n1350$ & $(14, \ol{6}, \ol{10}, \ol{16}, \ol{4}, 20)$,\linebreak\hbox{}\hfill$(12, 22, \ol{8}, 24, 2, 18)$ &
$-s^4t^3+s^3t^4+s^4t^2+s^3t^3-2s^2t^4-2s^3t^2+s^2t^3$\linebreak\hbox{}\hfill$\hbox{}+st^4+s^2t^2+s^3+s^2t-2st^2-2s^2+st+t^2+s-t$\\
\noalign{\vspace{.7ex}}
$L12n1357$ & $(14, \ol{6}, \ol{10}, 16, \ol{4}, 22)$,\linebreak\hbox{}\hfill$(20, 2, 8, 24, 12, 18)$ &
$-s^4t^4+s^4t^3+s^3t^4-s^4t^2+s^4t-s^2t^3$\linebreak\hbox{}\hfill$\hbox{}+s^2t^2-s^2t+t^3-t^2+s+t-1$\\
\noalign{\vspace{.7ex}}
$L12n1363$ & $(14, \ol{6}, \ol{10}, \ol{18}, \ol{4}, 22)$,\linebreak\hbox{}\hfill$(2, 20, \ol{8}, 24, 12, 16)$ &
$2s^2t^2-3s^2t-3st^2+2s^2+5st+2t^2-3s-3t+2$\\
\noalign{\vspace{.7ex}}
$L12n1367$ & $(14, \ol{6}, \ol{10}, \ol{18}, \ol{4}, 24)$,\linebreak\hbox{}\hfill$(2, 12, 22, \ol{8}, 16, 20)$ &
$s^4t^2+s^3t^3-s^2t^4-s^4t-2s^3t^2+st^4+s^3t$\linebreak\hbox{}\hfill$\hbox{}+s^2t^2+st^3+s^3-2st^2-t^3-s^2+st+t^2$\\
\noalign{\vspace{.7ex}}
$L12n1374$ & $(14, \ol{6}, \ol{10}, 20, \ol{4}, 16)$,\linebreak\hbox{}\hfill$(2, 12, 22, 24, 8, 18)$ &
$s^4t^3+s^3t^4-s^4t^2-s^3t^3-s^2t^4+s^4t+st^4$\linebreak\hbox{}\hfill$\hbox{}-s^4+s^2t^2-t^4+s^3+t^3-s^2-st-t^2+s+t$\\
\noalign{\vspace{.7ex}}
$L12n1404$ & $(14, \ol{8}, \ol{18}, \ol{12}, \ol{22}, \ol{4})$,\linebreak\hbox{}\hfill$(20, 2, 24, \ol{6}, 16, \ol{10})$ &
$2s^2t^3-st^4-2s^2t^2-st^3+t^4$\linebreak\hbox{}\hfill$\hbox{}+3st^2+s^2-st-2t^2-s+2t$\\
\bottomrule
\end{tabular}
\medskip
\end{center}
\caption{Seven 12-crossing links and their Alexander polynomials}
\label{table:seven-12-crossing-links-and-alexander-poly}
\end{table}

\subsection{Using the covering link technique}\label{section:batson-seed-covering-link-technique}

Batson and Seed, in~\cite[Table~3]{batson-seed}, give nine further
examples of links which have two unknotted components and linking
number $\pm 1$.  They list these links as having splitting number
either three or five.  Translating notation again, we have: $L11a372 =
{}^2a^{11}_{739}$, $L12aX = {}^2a^{12}_{X+1288}$ and $L12nY =
{}^2n^{12}_{Y+4196}$.

Table~\ref{table:nine-links-and-covering-slice-genus} lists
the results of our computations, giving the slice genus of the knot
obtained by taking a $2$-fold branched cover of $S^3$, branched over
one of the components, the method which we use to compute the slice
genus, and the resulting splitting number obtained by the methods of
Section~\ref{section:covering-link-technique}.

\begin{table}
\begin{center}
\small
\def\lineseparator{\noalign{\vspace{.7ex}}}
\begin{tabular}{lm{35mm}@{\hspace{.5em}}c@{\hspace{.5em}}cc}
\toprule
Link & DT code &
{\begin{tabular}{@{}c@{}}Covering knot\\slice genus\end{tabular}} &
{\begin{tabular}{@{}c@{}}Method for\\slice genus\end{tabular}} &
{\begin{tabular}{@{}c@{}}Splitting\\number\end{tabular}} \\
\midrule
$L11a372$ & $(12, 14, 16, 20, 18)$,\linebreak\hbox{}\hfill$(10, 2, 4, 22, 8, 6)$ & 2 & $\sigma=-4$ & 5 \\
\lineseparator
$L12a1233$ & $(12, 14, 16, 18, 20)$,\linebreak\hbox{}\hfill$(2, 24, 4, 6, 22, 8, 10)$ & 2 & $\sigma=-4$ & 5 \\
\lineseparator
$L12a1264$ & $(12, 14, 16, 20, 18)$,\linebreak\hbox{}\hfill$(2, 24, 4, 22, 8, 6, 10)$ & 2 & $\sigma=-4$ & 5  \\
\lineseparator
$L12a1384$ & $(14, 8, 16, 24, 18, 20)$,\linebreak\hbox{}\hfill$(2, 22, 4, 10, 12, 6)$ & 2 & $\sigma=-4$ & 5 \\
\lineseparator
$L12n1319$ & $(12, 14, 16, 24, \ol{18})$,\linebreak\hbox{}\hfill$(2, 10, 22, \ol{20}, \ol{8}, 4, 6)$ & 1 & $\sigma=-2$ & 3 \\
\lineseparator
$L12n1320$ & $(12, \ol{14}, \ol{16}, 24, \ol{18})$,\linebreak\hbox{}\hfill$(6, 22, \ol{20}, \ol{8}, \ol{4}, \ol{2}, 10)$ & 1 & {\begin{tabular}{@{}>{$}c<{$}@{}}p=3,\, q=7,\\\Delta^{\chi}(s) = 7s^2+15s+7\end{tabular}} & 3\\
\lineseparator
$L12n1321$ & $(12, 14, 16, 24, \ol{18})$,\linebreak\hbox{}\hfill$(10, 2, 22, \ol{20}, \ol{8}, 4, 6)$ & 2 & $s=4$ & 5\\
\lineseparator
$L12n1323$ & $(12, 14, 18, 16, \ol{20})$,\linebreak\hbox{}\hfill$(10, 2, 24, 6, \ol{22}, \ol{8}, 4)$ & 2 & $\sigma=-4$ & 5\\
\lineseparator
$L12n1326$ & $(12, 14, \ol{18}, 24, \ol{20})$,\linebreak\hbox{}\hfill$(8, 2, 4, \ol{22}, \ol{10}, \ol{6}, \ol{16})$ & 1 & {\begin{tabular}{@{}>{$}c<{$}@{}}p=3,\, q=7,\\ \Delta^{\chi}(s) = 7s^2-71s+7\end{tabular}} & 3 \\
\bottomrule
\end{tabular}
\medskip
\end{center}
\caption{Nine links and their covering knot slice genus}
\label{table:nine-links-and-covering-slice-genus}
\end{table}

The methods we use to compute the slice genus of the covering knot are
as follows.  First, the slice genus of a knot is bounded below by half
the absolute value of its signature $\sigma(K)=\sign(A+A^T)$, where
$A$ is a Seifert matrix of~$K$, by \cite[Theorem~9.1]{Mu67}.  We used a Python software package of
the first author to compute~$\sigma(K)$.  The Rasmussen
$s$-invariant~\cite{Rasmussen-s-invariant} also gives a lower bound by
$|s(K)| \leq 2g_4(K)$.  We used \texttt{JavaKh} of J. Green and
S. Morrison to compute~$s(K)$.

We can also prove that a knot is not
slice using the twisted Alexander polynomial~\cite{KirkLivingston},
denoted $\Delta_K^{\chi}(s) \in \Q(\zeta_q)[s^{\pm 1}]$, where
$\zeta_q$ is a primitive $q$-th root of unity, associated to the
$p$-fold cyclic cover $X_p$ of the knot exterior $X$ and a character
$\chi \colon TH_1(X_p;\Z) \to \Z_q$.  For slice knots, there exists a metaboliser of the $\Q/\Z$ valued
linking form on $H_1(X_p;\Z)$, such that for characters $\chi$ which vanish on the metaboliser, the twisted Alexander polynomial
factorises (up to a unit) as $g(s)\ol{g(s)}$, for some $g\in \Q(\zeta_q)[s^{\pm 1}]$.  By
checking that this condition does not hold for all metabolisers, we
can prove that a knot is not slice.  (For each covering knot $K$ to
which we apply this, all metabolisers give the
same polynomial~$\Delta^\chi_K$.) Our computations of twisted Alexander polynomials
were performed using a Maple program written by C.~Herald, P.~Kirk and
C.~Livingston~\cite{HKL}.

The invariants discussed above give us lower bounds on the slice
genera of the covering knots.  We do not need to know the precise slice genera in order to obtain lower bounds.  Nevertheless we point out that we are able to determine them. In each case we found the requisite crossing changes to
split the link, so an application of
Theorem~\ref{theorem:covering-link-lk-one-ribbon-intro} gives us an upper bound on the slice genus of the
covering knot, which implies that the above lower bounds are sharp.

For the links $L11a372$, $L12a1233$, $L12a1264$, $L12a1384$,
$L12n1321$ and $L12n1323$, we are able to show that the splitting
numbers of these links is~5.  Amusingly we use Khovanov homology, in
the guise of the $s$-invariant, to compute that the slice genus of the
covering knot of $L12n1321$ is~2.  We remark that this knot has
$\sigma=-2$, which is only sufficient to show that the splitting
number is at least~3.

For the other links, as in
Section~\ref{section:alex-poly-batson-seed}, our obstruction gives the
same information as the Batson-Seed lower bound, namely that the
splitting number is at least three.  For the links $L12n1319$,
$L12n1320$ and $L12n1326$ we looked at the diagrams and found the
crossing changes to verify that the splitting number is indeed 3.

We present one example in detail, the link $L11a372$, which is shown as given by LinkInfo on
the left of Figure~\ref{figure:L11a372}, while on the right the link $L11a372$ is shown after an isotopy, to
prepare for making a diagram of a covering link.  It is easy to see
from the diagram that the splitting number is at most 5.  The link
$L11a372$ has $b(L11a372)=3$.

\begin{figure}[H]
\includegraphics[scale=.65]{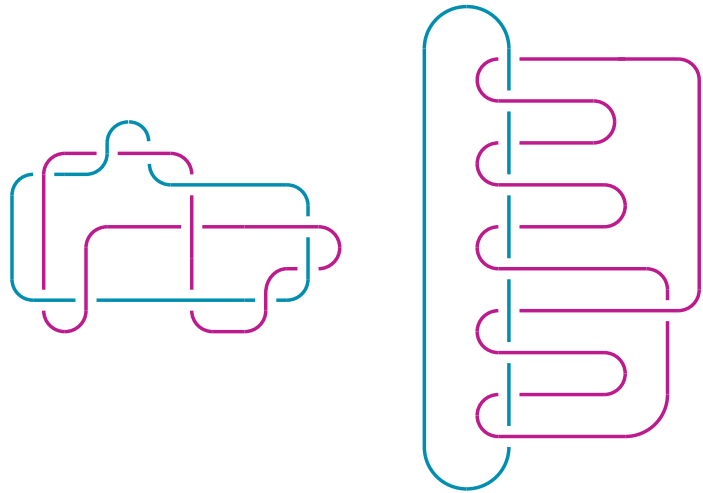}
\caption{Left: the link $L11a372$. Right: after an isotopy.}
\label{figure:L11a372}
\end{figure}

The 2-fold covering link obtained by branching over the left hand
component is shown in Figure~\ref{figure:L11a372-2}.  This turns out
to be the knot $7_5$, which according to KnotInfo~\cite{knotinfo} has
$|\sigma|= 4$ and slice genus $2$.  Therefore by
Theorem~\ref{theorem:covering-link-lk-one-ribbon-intro}, the splitting
number is 5.

\begin{figure}[H]
\includegraphics[scale=.6]{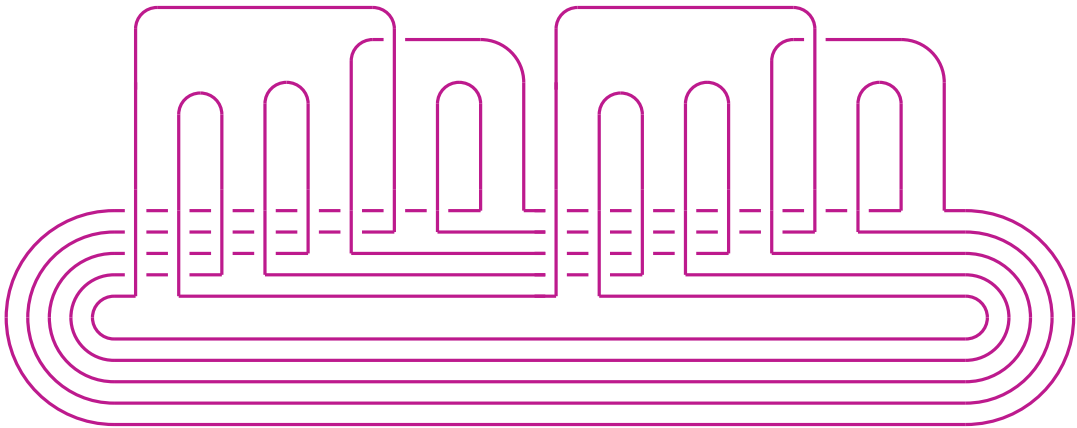}
\caption{The covering link obtained by taking a 2-fold branched
  covering over the left hand component of the link in
  Figure~\ref{figure:L11a372}.}
\label{figure:L11a372-2}
\end{figure}

\subsection{A 3-component example}\label{section:batson-seed-3-comp-example}

There is one final
link listed in~\cite[Table~3]{batson-seed} as having splitting number
either three or five, namely the 3-component link $L:= L12a1622$,
which is shown in Figure~\ref{figure:L12a1622-1}.  In the notation
of~\cite{batson-seed}, $L$ is the link ${}^3a^{12}_{2910}$.

\begin{figure}[H]
  \labellist
  \small\hair 0mm
  \pinlabel {$L_1$} at 0 110
  \pinlabel {$L_2$} at 140 273
  \pinlabel {$L_3$} at 233 18
  \endlabellist
  \includegraphics[scale=.65]{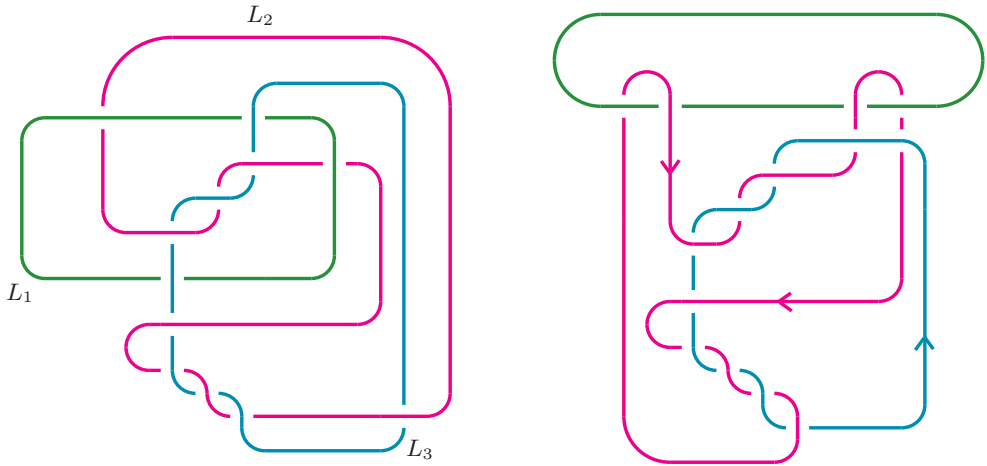}
  \caption{Left: the link $L12a1622$.  Right: the same link, after an
    isotopy to prepare for taking a covering link by branching over the
    top component.}
  \label{figure:L12a1622-1}
\end{figure}

\begin{figure}[H]
\begin{center}
\includegraphics[scale=.5]{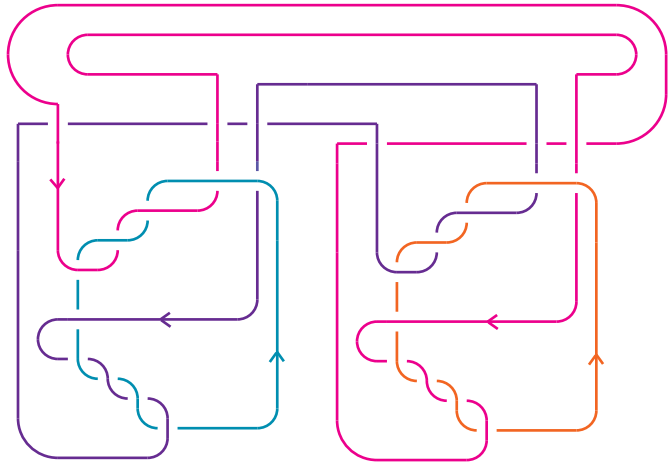}
\end{center}
\caption{The 2-fold covering link of the link in Figure~\ref{figure:L12a1622-1}, branched over $L_1$.}
\label{figure:L12a1622-2}
\end{figure}

  We show that the splitting number of $L$ is in fact $5$.  Note that the
components are unknotted, and the only nonzero linking number is
between $L_2$ and $L_3$, which have $|\lk(L_2,L_3)| = 1$.  Thus the
splitting number is odd by Lemma~\ref{lem:splittingnumberbasics}.  It
is easy to find $5$ crossing changes which suffice.

We begin by showing that three crossing changes involving just $L_2$
and $L_3$ do not suffice to split the link.  We take the 2-fold
covering link $J$ with respect to~$L_1$.  The result of an isotopy to
prepare for taking such a covering is shown on the right of
Figure~\ref{figure:L12a1622-1}.  The resulting covering link $J$ is
shown in Figure~\ref{figure:L12a1622-2}.  The link $J$ has splitting
number $10$ by Lemma~\ref{lem:splittingnumberbasics}, with a sharp lower bound given by the sum of the absolute
values of the linking numbers between the components.  By
Corollary~\ref{corollary:covering-link-zero-lk-no}, we have that
$\sp_1(L) \geq 5$.

Combining $\sp_1(L) \geq 5$ with the linking number, it follows that
if $\sp(L) \le 3$, then exactly one crossing change involving
$(L_2,L_3)$ is required to split the link, and there can be either two
additional $(L_1, L_2)$ crossing changes, or two $(L_1, L_3)$ crossing
changes.  We will give the argument to show that the first possibility
cannot happen; the argument discounting the second possibility is
analogous.

Suppose that two $(L_1,L_2)$ crossing changes and one $(L_2,L_3)$
crossing change yields the unlink.  Applying
Corollary~\ref{cor:covering-genus-lower-bound} (with $m=3$,
$\a=2$, $\b=1$, $g_4(L_k)=0$), it follows that the covering link $J\subset
S^3$ bounds an oriented surface $F$ of Euler characteristic
$2(3-1)-2-4 = -2$ which is smoothly embedded in~$D^4$ and has no
closed component.  Also, $F$ is connected by the last part of
Corollary~\ref{cor:covering-genus-lower-bound} since both
$L_1$ and $L_3$ are involved in some crossing change
with~$L_2$.
Since $J$ has 4 components, $F$ is a 3-punctured disc.  That is, $J$
is weakly slice.

To show that this cannot be the case for $J$, we use the link signature
invariant, which is defined similarly to the knot signature: for a link $J$, choose a surface $V$ in $S^3$ bounded by $J$ ($V$ may be
disconnected), define the Seifert pairing on $H_1(V)$ and an
associated Seifert matrix $A$ as usual.  Then the \emph{link
  signature} of $J$ is defined by $\sigma(J) = \sign (A+A^T)$.  Due to
K.~Murasugi~\cite{Mu67}, if an $m$-component link $J$ bounds a smoothly
embedded oriented surface $F$ in $D^4$, we have $|\sigma(J)| \le
2g(F)+m-b_0(F)$ where $g(F)$ is the genus and $b_0(F)$ is the 0th
Betti number of $F$.  For our covering link $J$, since it bounds a
3-punctured disc in $D^4$, we have $|\sigma(J)|\le 3$.  Here we orient
$J$ as in Figure~\ref{figure:L12a1622-2}; this orientation is obtained using the orientations of
$L_2$ and $L_3$ shown on the right of
Figure~\ref{figure:L12a1622-1}.  On the other hand, a computation aided by a Python
software package of the first author shows that $\sigma(J)=-7$.  From this
contradiction it follows that one $(L_2, L_3)$ crossing change and two
$(L_1, L_2)$ crossing changes never split~$L$.

\section{Links with 9 or fewer crossings}\label{section:table-splitting-no-results}

In Table~\ref{table:splitting-numbers} we give the splitting numbers for the
links of 9 crossings or fewer, together with the method which is used to give a
sharp lower bound for the splitting number.  The entry in the method
column of the table refers to the list below.

In the case of 2-component links with unknotted components and linking number one, knowing that the \emph{unlinking number} is greater than one implies that the splitting number is at least three.
Recall that by definition the unlinking number of an $m$-component link $L$ is the minimal number of crossing changes required to convert $L$ to the $m$-component unlink.  Note that for this link invariant, crossing changes of a component with itself are in general permitted.

In Method~\ref{aaitem:two-comp-lk-one-unknotted} below, we will make use of computations of unlinking numbers made by P.~Kohn in~\cite{Kohn93}, where making considerable use of his earlier work in~\cite{Kohn91}, he computed the unlinking numbers of 2-component links
with 9 or fewer crossings, in all but 5 cases.

\begin{table}[t]
\begin{center}
\small
\begin{tabular}[t]{lcc}
\toprule
Link $L$ & $\sp(L)$ & Method\\
\midrule
$L2a1$	&1&(\ref{aaitem:non-trivial-link})\\
$L4a1$	&2&(\ref{aaitem:non-trivial-link})\\
$L5a1$	&2&(\ref{aaitem:non-trivial-link})\\
$L6a1$  &2&(\ref{aaitem:non-trivial-link})\\
$L6a2$	&3&(\ref{aaitem:non-trivial-link})\\
$L6a3$	&3&(\ref{aaitem:non-trivial-link})\\
$L6a4$	&2&(\ref{aaitem:non-trivial-link})\\
$L6a5$	&3&(\ref{aaitem:non-trivial-link})\\
$L6n1$	&3&(\ref{aaitem:non-trivial-link})\\
$L7a1$	&2&(\ref{aaitem:non-trivial-link})\\
$L7a2$	&2&(\ref{aaitem:non-trivial-link})\\
$L7a3$	&2&(\ref{aaitem:non-trivial-link})\\
$L7a4$	&2&(\ref{aaitem:non-trivial-link})\\
$L7a5$	&1&(\ref{aaitem:non-trivial-link})\\
$L7a6$	&3&(\ref{aaitem:two-comp-lk-one-unknotted})\\
$L7a7$	&3&(\ref{aaitem:non-trivial-link})\\
$L7n1$	&2&(\ref{aaitem:non-trivial-link})\\
$L7n2$	&2&(\ref{aaitem:non-trivial-link})\\
$L8a1$	&2&(\ref{aaitem:non-trivial-link})\\
$L8a2$	&2&(\ref{aaitem:non-trivial-link})\\
$L8a3$	&2&(\ref{aaitem:non-trivial-link})\\
$L8a4$	&2&(\ref{aaitem:non-trivial-link})\\
$L8a5$	&2&(\ref{aaitem:non-trivial-link})\\
$L8a6$	&2&(\ref{aaitem:non-trivial-link})\\
$L8a7$	&2&(\ref{aaitem:non-trivial-link})\\
$L8a8$	&3&(\ref{aaitem:two-comp-lk-one-unknotted})\\
$L8a9$	&3&(\ref{aaitem:two-comp-lk-one-unknotted})\\
$L8a10$	&3&(\ref{aaitem:non-trivial-link})\\
$L8a11$	&3&(\ref{aaitem:non-trivial-link})\\
$L8a12$	&4&(\ref{aaitem:non-trivial-link})\\
$L8a13$	&4&(\ref{aaitem:non-trivial-link})\\
$L8a14$	&4&(\ref{aaitem:non-trivial-link})\\
$L8a15$	&3&(\ref{aaitem:non-trivial-link})\\
$L8a16$	&3&(\ref{aaitem:two-comp-lk-one-unknotted})\\
$L8a17$	&4&(\ref{aaitem:non-trivial-link})\\
$L8a18$	&4&(\ref{aaitem:non-trivial-link})\\
$L8a19$	&2&(\ref{aaitem:non-trivial-link})\\
$L8a20$	&4&(\ref{aaitem:non-trivial-link})\\
$L8a21$	&4&(\ref{aaitem:non-trivial-link})\\
$L8n1$	&2&(\ref{aaitem:non-trivial-link})\\
$L8n2$	&2&(\ref{aaitem:non-trivial-link})\\
$L8n3$	&4&(\ref{aaitem:non-trivial-link})\\
$L8n4$	&4&(\ref{aaitem:non-trivial-link})\\
$L8n5$	&2&(\ref{aaitem:non-trivial-link})\\
\bottomrule
\end{tabular}%
\qquad
\begin{tabular}[t]{lcc}
\toprule
Link $L$ & $\sp(L)$ & Method\\
\midrule
$L8n6$	&4&(\ref{aaitem:non-trivial-link})\\
$L8n7$	&4&(\ref{aaitem:non-trivial-link})\\
$L8n8$	&4&(\ref{aaitem:non-trivial-link})\\
$L9a1$	&2&(\ref{aaitem:non-trivial-link})\\
$L9a2$	&2&(\ref{aaitem:non-trivial-link})\\
$L9a3$	&2&(\ref{aaitem:non-trivial-link})\\
$L9a4$	&2&(\ref{aaitem:non-trivial-link})\\
$L9a5$	&2&(\ref{aaitem:non-trivial-link})\\
$L9a6$	&2&(\ref{aaitem:non-trivial-link})\\
$L9a7$	&2&(\ref{aaitem:non-trivial-link})\\
$L9a8$	&2&(\ref{aaitem:non-trivial-link})\\
$L9a9$	&2&(\ref{aaitem:non-trivial-link})\\
$L9a10$	&2&(\ref{aaitem:non-trivial-link})\\
$L9a11$	&2&(\ref{aaitem:non-trivial-link})\\
$L9a12$	&2&(\ref{aaitem:non-trivial-link})\\
$L9a13$ &2&(\ref{aaitem:non-trivial-link})\\
$L9a14$	&2&(\ref{aaitem:non-trivial-link})\\
$L9a15$	&2&(\ref{aaitem:non-trivial-link})\\
$L9a16$	&2&(\ref{aaitem:non-trivial-link})\\
$L9a17$	&2&(\ref{aaitem:non-trivial-link})\\
$L9a18$	&2&(\ref{aaitem:non-trivial-link})\\
$L9a19$	&2&(\ref{aaitem:non-trivial-link})\\
$L9a20$	&3&(\ref{aaitem:two-comp-lk-one-unknotted})\\
$L9a21$	&1&(\ref{aaitem:non-trivial-link})\\
$L9a22$	&3&(\ref{aaitem:two-comp-lk-one-unknotted})\\
$L9a23$	&3&(\ref{aaitem:non-trivial-link})\\
$L9a24$	&3&(\ref{aaitem:alexander-polynomial})\\
$L9a25$	&3&(\ref{aaitem:non-trivial-link})\\
$L9a26$	&3&(\ref{aaitem:two-comp-lk-one-unknotted})\\
$L9a27$	&1&(\ref{aaitem:non-trivial-link})\\
$L9a28$	&3&(\ref{aaitem:non-trivial-link})\\
$L9a29$	&3&(\ref{aaitem:alexander-polynomial})\\
$L9a30$	&3&(\ref{aaitem:two-comp-lk-one-unknotted})\\
$L9a31$	&1&(\ref{aaitem:non-trivial-link})\\
$L9a32$	&3&(\ref{aaitem:non-trivial-link})\\
$L9a33$	&3&(\ref{aaitem:non-trivial-link})\\
$L9a34$	&2&(\ref{aaitem:non-trivial-link})\\
$L9a35$	&2&(\ref{aaitem:non-trivial-link})\\
$L9a36$	&4&(\ref{aaitem:two-comp-lk-zero-unknotted})\\
$L9a37$	&2&(\ref{aaitem:non-trivial-link})\\
$L9a38$	&2&(\ref{aaitem:non-trivial-link})\\
$L9a39$	&2&(\ref{aaitem:non-trivial-link})\\
$L9a40$	&4&(\ref{aaitem:two-comp-lk-zero-unknotted})\\
$L9a41$	&2&(\ref{aaitem:non-trivial-link})\\
\bottomrule
\end{tabular}%
\qquad
\begin{tabular}[t]{lcc}
\toprule
Link $L$ & $\sp(L)$ & Method\\
\midrule
$L9a42$	&2&(\ref{aaitem:non-trivial-link})\\
$L9a43$	&3&(\ref{aaitem:non-trivial-link})\\
$L9a44$	&3&(\ref{aaitem:non-trivial-link})\\
$L9a45$	&3&(\ref{aaitem:non-trivial-link})\\
$L9a46$	&3&(\ref{aaitem:two-comp-lk-one-unknotted})\\
$L9a47$ &4&(\ref{aaitem:linking-no-whitehead})\\
$L9a48$	&4&(\ref{aaitem:non-trivial-link})\\
$L9a49$	&4&(\ref{aaitem:non-trivial-link})\\
$L9a50$	&4&(\ref{aaitem:linking-no-whitehead})\\
$L9a51$	&4&(\ref{aaitem:non-trivial-link})\\
$L9a52$	&4&(\ref{aaitem:linking-no-whitehead})\\
$L9a53$	&2&(\ref{aaitem:non-trivial-link})\\
$L9a54$	&4&(\ref{aaitem:linking-no-whitehead})\\
$L9a55$	&4&(\ref{aaitem:non-trivial-link})\\
$L9n1$	&2&(\ref{aaitem:non-trivial-link})\\
$L9n2$	&2&(\ref{aaitem:non-trivial-link})\\
$L9n3$	&2&(\ref{aaitem:non-trivial-link})\\
$L9n4$	&2&(\ref{aaitem:non-trivial-link})\\
$L9n5$	&2&(\ref{aaitem:non-trivial-link})\\
$L9n6$	&2&(\ref{aaitem:non-trivial-link})\\
$L9n7$	&2&(\ref{aaitem:non-trivial-link})\\
$L9n8$	&2&(\ref{aaitem:non-trivial-link})\\
$L9n9$	&2&(\ref{aaitem:non-trivial-link})\\
$L9n10$	&2&(\ref{aaitem:non-trivial-link})\\
$L9n11$	&2&(\ref{aaitem:non-trivial-link})\\
$L9n12$	&2&(\ref{aaitem:non-trivial-link})\\
$L9n13$	&3&(\ref{aaitem:alexander-polynomial})\\
$L9n14$	&3&(\ref{aaitem:alexander-polynomial})\\
$L9n15$	&3&(\ref{aaitem:non-trivial-link})\\
$L9n16$	&3&(\ref{aaitem:non-trivial-link})\\
$L9n17$	&3&(\ref{aaitem:alexander-polynomial})\\
$L9n18$	&4&(\ref{aaitem:non-trivial-link})\\
$L9n19$	&4&(\ref{aaitem:non-trivial-link})\\
$L9n20$	&3&(\ref{aaitem:non-trivial-link})\\
$L9n21$	&3&(\ref{aaitem:non-trivial-link})\\
$L9n22$	&3&(\ref{aaitem:non-trivial-link})\\
$L9n23$	&4&(\ref{aaitem:linking-no-whitehead})\\
$L9n24$	&4&(\ref{aaitem:linking-no-whitehead})\\
$L9n25$	&2&(\ref{aaitem:linking-no-whitehead})\\
$L9n26$	&4&(\ref{aaitem:linking-no-whitehead})\\
$L9n27$	&4&(\ref{aaitem:linking-no-whitehead})\\
$L9n28$	&4&(\ref{aaitem:linking-no-whitehead})\\
{\setbox0\hbox{\hphantom{$L9n28$}}\hbox to\wd0{\hss$\cdot$\hss}} & $\cdot$ & $\cdot$ \\
{\setbox0\hbox{\hphantom{$L9n28$}}\hbox to\wd0{\hss$\cdot$\hss}} & $\cdot$ & $\cdot$ \\
\bottomrule
\end{tabular}
\end{center}
\medskip

\caption{Splitting numbers of links with 9 or fewer crossings.}
\label{table:splitting-numbers}
\end{table}

\begin{enumerate}
\item\label{aaitem:non-trivial-link} Using
  Lemma~\ref{lem:splittingnumberbasics}, the linking numbers determine
  the lower bound for the splitting number, by providing a lower bound
  or by fixing the splitting number modulo~2.
\item\label{aaitem:linking-no-whitehead}  A combination of linking numbers and either one or two
  Whitehead links as a sublink determine a lower bound for the
  splitting number.  That is, Lemma~\ref{lem:splittingnumberbasics}
  provides a sharp lower bound, with $c(L)\neq 0$.
\item\label{aaitem:two-comp-lk-one-unknotted} This is a link where the
  sum of the linking numbers is one and the components are unknotted,
  but which does not have unlinking number one, and so cannot have
  splitting number one.  Therefore the splitting number is at least
  three.

  For the 2-component case (all which use this method have two
  components apart from $L9a46$ and $L8a16$), we know that this link
  does not have unlinking number one by~\cite{Kohn93}.
   Kohn did not explicitly give an argument that the unlinking number of
  $L9a30$ is at least 2, but we computed the splitting
  number of $L9a30$ in Section~\ref{section:exampleII}.

  For the 3-component links $L8a16$ and
  $L9a46$, we show that the splitting number (and unlinking number) is not
  one in Sections~\ref{section:link-L8a16}
  and~\ref{section:link-L9a46} respectively.
\item\label{aaitem:alexander-polynomial} A 2-component link of
  linking number one, with at least one component knotted.  The
  Alexander polynomials of the components do not divide the
  multivariable Alexander polynomial of the link, so by
  Theorem~\ref{prop:split1} the splitting number must be at least 3.
  See Section~\ref{section:splitting-no-examples} for an example of
  this argument in action, for the link $L9a29$.
\item\label{aaitem:two-comp-lk-zero-unknotted} A 2-component link
  with unknotted components and linking number zero modulo two.  For
  the link $L9a36$, we note that the unlinking number is not two by
  \cite{Kohn93}.  Thus the splitting number must be at least 4.  For
  the link $L9a40$, we show that the splitting number is not two in
  Section~\ref{section:link-L9a40}.
\end{enumerate}

We remark that some of the splitting numbers in the table are also
given in~\cite{batson-seed}.

\section{Arguments for the splitting number of particular links}\label{section:arguments-splitting-no-particular-links}

\subsection{The link $L9a40$}\label{section:link-L9a40}

  The link $L9a40$ is shown on the left of Figure~\ref{figure:L9a40}.  We claim
that the splitting number of $L9a40$ is four.  Note that the linking
number is zero, so the splitting number is either two or four, since
it is easy to see from the diagram that four crossing changes suffice
to split the link.

\begin{figure}[H]
\includegraphics[scale=.65]{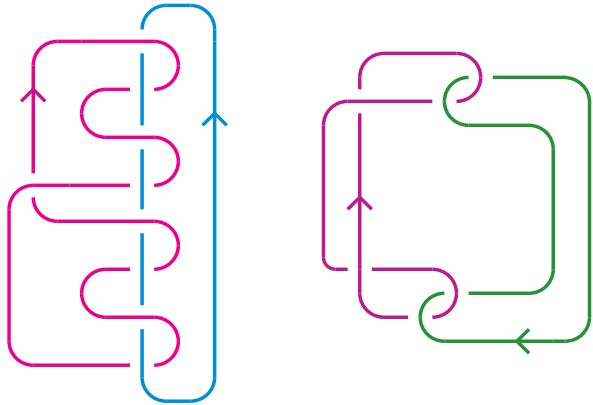}
\caption{Left: the link $L9a40$. Right: the 2-fold branched cover with respect to the right hand component.}
\label{figure:L9a40}
\end{figure}

To show that the splitting number cannot be two, we consider the
2-component link obtained by taking the 2-fold covering link with
respect to the right hand component, which is shown on the right of
Figure~\ref{figure:L9a40}.  This is the link~$L6a1$.  By
Corollary~\ref{corollary:splitting-number-2-covering-link}, if
$\sp(L9a40)=2$, then $L6a1$ would bound an annulus smoothly embedded
in~$D^4$.  Thus, any internal band sum of $L6a1$, which is a knot, would
have slice genus at most one.  But the band sum of $L6a1$ shown in
Figure~\ref{figure:L9a40-two} is the knot $7_5$, which has signature
$4$ and smooth slice genus~$2$.  It follows that the splitting number
of $L9a40$ is four as claimed.

\begin{figure}[H]
\begin{center}
\includegraphics[width=4cm]{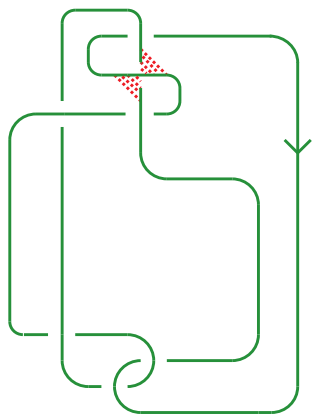}
\end{center}
\caption{A band sum of $L6a1$ to produce the knot $7_5$.  The band which was added is coloured in with dotted lines; it has a half twist in it.}
\label{figure:L9a40-two}
\end{figure}

\subsection{The link $L8a16$}\label{section:link-L8a16}

The link $L8a16$ is shown in Figure~\ref{figure:L8a16-one}.  The
components are labelled $L_1$, $L_2$ and $L_3$.  The linking number
$|\lk(L_1,L_2)|=1$, and the other linking numbers are trivial.  We
claim that $\sp(L8a16)=3$.  It is not hard to find three crossing
changes which work; for example change all three of the crossings
where $L_2$ passes over $L_1$ in Figure~\ref{figure:L8a16-one}.  By
this observation and Lemma~\ref{lem:splittingnumberbasics} the
splitting number is either one or three.  We therefore need to show
that it is not possible to split the link with a single crossing
change.  (We remark that this is the same as showing that the
unlinking number is greater than one, since the components are
unknotted.)

By linking number considerations a single crossing change would have
to involve $L_1$ and~$L_2$.  To discard this eventuality, we will take
a 2-fold covering link branched over $L_3$.

\begin{figure}[H]
  \labellist
  \small\hair 0mm
  \pinlabel {$L_1$} at 8 138
  \pinlabel {$L_2$} at 90 195
  \pinlabel {$L_3$} at 42 25
  \endlabellist
\includegraphics[scale=.6]{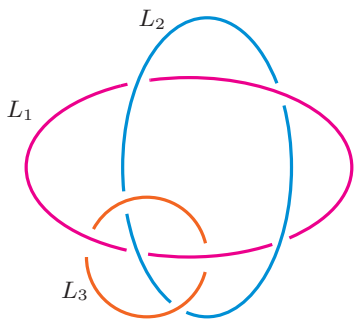}
\caption{The link $L8a16$.}
\label{figure:L8a16-one}
\end{figure}

The left of Figure~\ref{figure:L8a16-cover-over-L3} shows the link
$L8a16$ after an isotopy; the right hand picture shows the $2$-fold
cover branched over~$L_3$.  Call this link~$J$.  The sum of linking
numbers $\sum_{i <j} |\lk(J_i,J_j)| = 6$, so $\sp(J) \ge 6$ by
Lemma~\ref{lem:splittingnumberbasics} (in fact $\sp(J)=u(J)=6$).
Therefore, by Corollary~\ref{corollary:covering-link-zero-lk-no}, we
see that $\sp_3(L8a16) \ge 3$.  (Recall that $\sp_i(L)$ denotes the
splitting number of $L$ where the component $L_i$ is not involved in
any crossing changes.)  Thus, as claimed, it is
not possible to split the link in a single crossing.

\begin{figure}[H]
\begin{center}
\includegraphics[scale=.6]{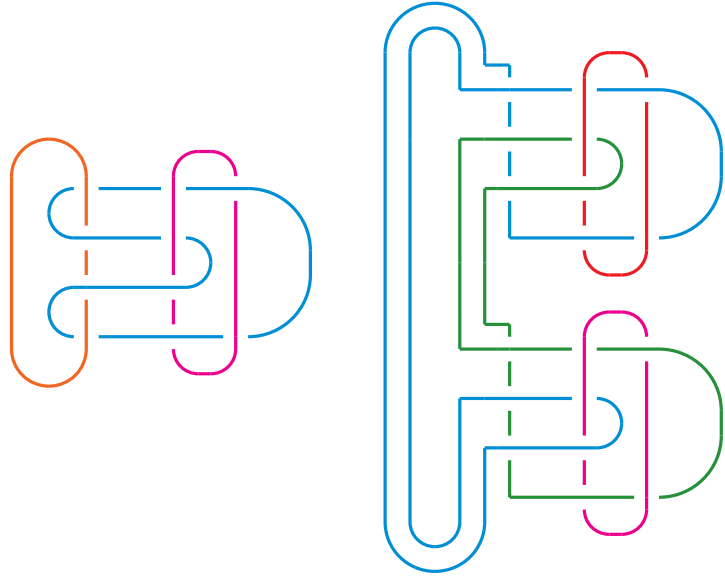}
\end{center}
\caption{The link $L8a16$ after an isotopy (left), and the 2-fold cover branched over $L_3$ (right).}
\label{figure:L8a16-cover-over-L3}
\end{figure}

\subsection{The link $L9a46$}\label{section:link-L9a46}

The link $L9a46$ is shown on the left of
Figure~\ref{figure:L9a46-one}.  We claim that $\sp(L9a46)=3$.  It is
not hard to find three changes which suffice.  For example, in
Figure~\ref{figure:L9a46-one}, change the crossings where $L_1$ passes
under~$L_2$.

Note that $|\lk(L_1,L_2)| =1$.  Therefore if one crossing change
suffices, it must be between $L_1$ and~$L_2$.  We need to show that
this is not possible.  For this purpose we apply
Corollary~\ref{corollary:covering-link-zero-lk-no} again.  We will
take a 2-fold covering link branched over~$L_3$.  In preparation for
this, the link from the left of Figure~\ref{figure:L9a46-one} is
shown, after an isotopy, on the right of
Figure~\ref{figure:L9a46-one}.

\begin{figure}[H]
  \labellist
  \small\hair 0mm
  \pinlabel {$L_1$} at 20 220
  \pinlabel {$L_2$} at 198 100
  \pinlabel {$L_3$} at 190 265
  \endlabellist
\includegraphics[scale=.65]{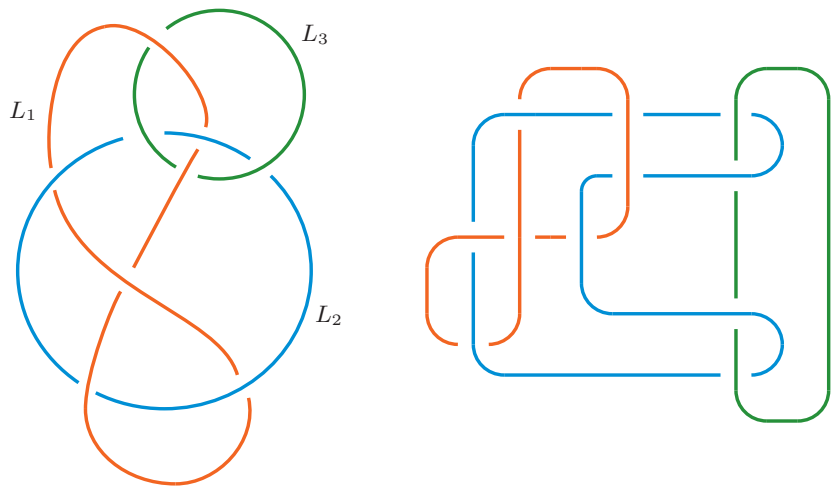}
\caption{Left: the link $L9a46$.  Right: the link $L9a46$ after an isotopy.}
\label{figure:L9a46-one}
\end{figure}

Taking the cover branched over the right hand component of the link on
the right of Figure~\ref{figure:L9a46-one}, we obtain the 2-fold
covering link $J$ shown in
Figure~\ref{figure:L9a46-covering-link-over-L3}.

\begin{figure}[H]
  \labellist
  \small\hair 0mm
  \pinlabel {$J_1$} at 35 140
  \pinlabel {$J_2$} at 9 72
  \pinlabel {$J_3$} at 126 210
  \pinlabel {$J_4$} at 396 139
  \endlabellist
\begin{center}
\includegraphics[scale=.7]{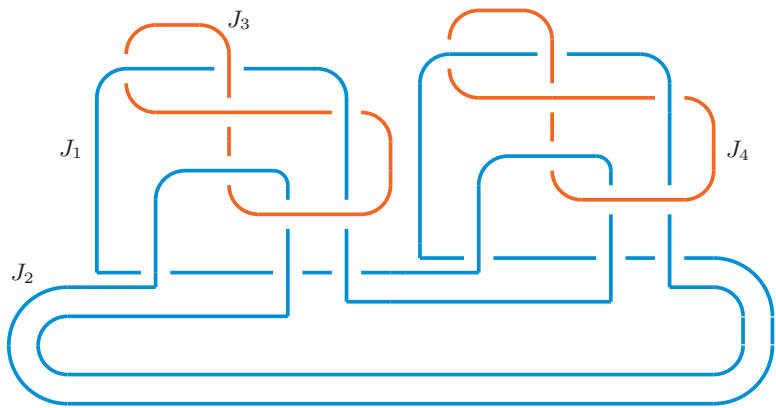}
\end{center}
\medskip

\caption{The 2-fold covering link of the link $L9a46$ from the right of Figure~\ref{figure:L9a46-one}.}
\label{figure:L9a46-covering-link-over-L3}
\end{figure}

We need to see that the link $J$ of
Figure~\ref{figure:L9a46-covering-link-over-L3} has splitting number at least 6.
Observe that $|\lk(J_1,J_4)| = |\lk(J_2,J_3)| = 1$.
Moreover, the sublinks $J_1 \sqcup J_3$ and $J_2 \sqcup J_4$ are
Whitehead links.  It now follows from
Lemma~\ref{lem:splittingnumberbasics} that $\sp(J)\geq
6$.  By Corollary~\ref{corollary:covering-link-zero-lk-no}, we obtain that
$\sp_3(L9a46)\geq 3$.  It thus follows from the above discussion that
$\sp(L9a46) = 3$.

\bibliographystyle{amsalpha}
\def\MR#1{}
\bibliography{unlinkingbib1}

\end{document}